\DeclareSymbolFont{AMSb}{U}{msb}{m}{n}
\DeclareMathSymbol{\N}{\mathbin}{AMSb}{"4E}
\DeclareMathSymbol{\Z}{\mathbin}{AMSb}{"5A}
\DeclareMathSymbol{\R}{\mathbin}{AMSb}{"52}
\DeclareMathSymbol{\Q}{\mathbin}{AMSb}{"51}
\DeclareMathSymbol{\I}{\mathbin}{AMSb}{"49}
\DeclareMathSymbol{\C}{\mathbin}{AMSb}{"43}
\numberwithin{equation}{section}
\newcommand{\iso}{\cong}
\newcommand{\Image}{\textnormal{Im}}
\newcommand{\Oplus}{\bigoplus}
\newcommand{\lmlex}{\textnormal{lm}_{\textnormal{lex}}}
\newcommand{\Glm}{G\textnormal{-lm}}
\newcommand{\Red}{\textnormal{Red}}
\newcommand{\stab}{\textnormal{stab}}
\newtheorem{defn}[equation]{Def{i}nition}
\newtheorem{lemma}[equation]{Lemma}
\newtheorem{thm}[equation]{Theorem}
\newtheorem{corol}[equation]{Corollary}
\newtheorem{example}[equation]{Example}
\newtheorem*{rep@theorem}{\rep@title}
\newcommand{\newreptheorem}[2]{%
\newenvironment{rep#1}[1]{%
 \def\rep@title{#2 \ref{##1}}%
 \begin{rep@theorem}}%
 {\end{rep@theorem}}}
\begin{document}

\markboth{Robert Mckemey}
{Structure Theorems for the Symmetric Group}

%
%

\title{STRUCTURE THEOREMS FOR THE SYMMETRIC GROUPS ACTING ON ITS NATURAL MODULE
}

\author{ROBERT MCKEMEY}


\maketitle

\begin{abstract}
This paper gives an explicit structure theorem for the symmetric group acting on the symmetric algebra of its natural module.
Let $G$ be the symmetric group on $x_1, \dots , x_n$ and let $d_i$ be the $i^{\text{th}}$ elementary symmetric polynomial in the $x_i$'s.
We show that if we take monomial representations discussed in \cite[Section 3]{Kemper} to be the modules $V_I$,
then we have an isomorphism of $kG$-modules $k[x_1, \dots , x_n] \iso \Oplus_{\{n\} \subseteq I \subseteq [n]} k[d_I] \otimes_k V_I$.
\end{abstract}



This paper gives a structure theorem for the symmetric group, $G$, acting on its natural module,
which gives us a $kG$-decomposition of the graded components of $S=k[x_1, \dots, x_n]$,
where $k$ is a unital ring such that $ab=0$ implies $a=0$ or $b=0$ for $a,b \in k$.
Which is to say, for $d_1, \dots, d_n$ the elementary symmetric polynomials in $x_1, \dots, x_n$,
we give $kG$-submodules of $S$, $V_I$ for $I \subseteq \{1, \dots, n\}$ $n \in I$,
such that the multiplication map $\Oplus_{\{n\} \subseteq I \subseteq \{1, \dots, n\}} k[d_I] \otimes_k V_I \to S$
is a $kG$-isomorphism.

In fact the monomial representations discussed in \cite[Section 3]{Kemper} maybe be taken as the modules, $V_I$,
occurring in a structure theorem.
Many of the intermediate steps will be similar to those from \cite{Kemper},
but the fact that we get a structure theorem is new
as is the observation that we may use $e_I$, rather than the $e_I'$ used by Kemper.
Note that although the ring $k$ need not be commutative,
we require that $ax_i=x_ia$ for $i=1, \dots, n$ and for all $a \in k$.


It will turn out that in this example of a structure theorem 
all $V_I$ with $n \not \in I$ are zero,
this was also true for the upper triangular structure theorem.

For more information on structure theorems see
\cite{Symonds mod str 1}, \cite{Symonds mod str 2}, \cite{Symonds mod str 3}, \cite{Symonds mod str 0}
and \cite{Symonds StrThm}.
A more verbose exposition of this material and additional examples of structure theorems can be found in \cite{MyThesis}.

\section{Definition and Results in the Literature}

Let $k$ be a unital ring such that $ab=0$ implies $a=0$ or $b=0$,
which need not be commutative.
Let $R=k[d_1, \dots , d_n]$ be the $\N$ graded polynomial $k$-algebra in the
indeterminants $d_1, \dots , d_n$, with $\deg(d_i)>0$ but not necessarily with $\deg(d_1)=1$.
Let $G$ be any finite group and let $S$ be a finitely generated $\Z$-graded $RG$-module.

\begin{defn}
With notation as above,
a \emph{Structure Theorem} for $S$ over $RG$ is a set of finitely generated $kG$-submodules, $X_I \subseteq S$,
one for each $I \subseteq \{1, \dots , n \}$,
such that the map:
\begin{align*}
\phi: \Oplus_{I \subseteq \{1, \dots , n\} } k[d_i | i \in I] \otimes_k X_I & \to S \\
\phi: d \otimes_k x & \mapsto dx
\end{align*}
is an isomorphism of $kG$-modules.
\end{defn}

Note that the map $\phi$ is split over $kG$, as it is a $kG$-isomorphism.
As the module being mapped from is not an $R$-module, it cannot hope to be an $R$-map,
however the following lemma is straightforward.

\begin{lemma}
For each component of the sum, the map:
\begin{align*}
\phi_I:k[d_i | i \in I] \otimes_k X_I & \to S \\
\phi_I : d \otimes_k x & \mapsto dx
\end{align*}
is a $k[d_i | i \in I]G$-homomorphism.
\end{lemma}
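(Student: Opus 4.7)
The plan is to verify the two required linearities directly from definitions, after first fixing a $k[d_i | i \in I]G$-module structure on the source. The natural choice is to let $k[d_i | i \in I]$ act on the left tensor factor by multiplication and let $G$ act trivially on $k[d_i | i \in I]$ and via the inherited $kG$-action on $X_I$, so $g(d \otimes_k x) = d \otimes_k gx$. Under this convention the two actions on $k[d_i | i \in I] \otimes_k X_I$ commute tautologically, so the tensor product really is a $k[d_i | i \in I]G$-module.

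First I would check that $\phi_I$ is well-defined as a $k$-linear map. The multiplication $S \times S \to S$ is $k$-bilinear---here the hypothesis that $k$ commutes with the $d_i$, built into the phrase ``polynomial $k$-algebra'', is exactly what is needed---so the restricted map $k[d_i | i \in I] \times X_I \to S$, $(d, x) \mapsto dx$, factors through $\otimes_k$. Next I would verify $k[d_i | i \in I]$-linearity: for $d, d' \in k[d_i | i \in I]$ and $x \in X_I$,
\[ \phi_I(d' \cdot (d \otimes_k x)) = \phi_I(d'd \otimes_k x) = (d'd)x = d'(dx) = d' \phi_I(d \otimes_k x), \]
which is just associativity of the $R$-action on $S$.

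Finally, for $G$-equivariance I would compute $\phi_I(g(d \otimes_k x)) = d(gx)$ and $g \phi_I(d \otimes_k x) = g(dx)$, and observe that these agree because $S$ is an $RG$-module, so the $R$-action and the $G$-action commute on $S$. This is the one essential use of the $RG$-module hypothesis.

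Since everything reduces to unwinding definitions, there is no genuine obstacle. The only point worth flagging is that the trivial $G$-action on $k[d_i | i \in I]$ is forced by the requirement that $\phi_I$ be $G$-equivariant; this is consistent with the intended application in which the $d_i$ are $G$-invariants.
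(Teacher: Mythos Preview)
Your argument is correct and is exactly the routine verification the paper has in mind; the paper itself does not supply a proof, merely remarking that the lemma is ``straightforward'' immediately before stating it. Your identification of the $k[d_I]G$-module structure on the source (trivial $G$-action on $k[d_I]$, given action on $X_I$) and the use of the $RG$-module hypothesis to get $g(dx)=d(gx)$ are precisely the points that make the lemma go through.
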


If we insist that $k$ is a field, then we know that a structure theorem exists for the symmetric group acting on its natural module by the following arguments.

\begin{thm}[Symonds 2006]\cite{Symonds StrThm} \label{thm: symonds strthm}
Let $k$ be a field and let $R=k[d_1, \dots , d_n]$ be the graded polynomial ring with $\deg(d_i)>0$ for all $i$,
let $G$ be a finite group graded in degree 0
and let $S$ be a finitely generated $\Z$-graded $RG$-module.
A structure theorem for $S$ exists exactly when
only finitely many isomorphism classes of indecomposable $kG$-modules occur as summands of $S$.
\end{thm}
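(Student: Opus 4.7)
The forward implication is the easier direction. If $S \cong \bigoplus_I k[d_i \mid i \in I] \otimes_k X_I$ as $kG$-modules, then each $X_I$ is a finitely generated $kG$-module; since $k$ is a field and $G$ is finite, $X_I$ is finite-dimensional and Krull--Schmidt gives a decomposition into a finite direct sum of indecomposables with only finitely many isomorphism types. Because $k[d_i \mid i \in I]$ has a $k$-basis of monomials, each tensor factor is a (countable) direct sum of copies of $X_I$ as a $kG$-module. The set of indecomposable summand types of $S$ is therefore contained in the union of those appearing in the finitely many $X_I$, which is itself finite.

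For the converse I would proceed by induction on $n$. When $n = 0$ the ring is $R = k$ and one simply takes $X_\emptyset = S$. For the inductive step the key idea is to separate a $d_n$-torsion part from a $d_n$-torsion-free part. Set
\[
T = \{ s \in S : d_n^m s = 0 \text{ for some } m \geq 0 \},
\]
a graded $RG$-submodule of $S$. If one can produce a graded $RG$-splitting $S \cong T \oplus F$ with multiplication by $d_n$ injective on $F$, together with a graded $kG$-submodule $Y \subseteq F$ realizing $F \cong k[d_n] \otimes_k Y$ as $k[d_n]G$-modules, then both $T$ and $Y$ remain finitely generated and still satisfy the finite-indecomposables hypothesis over $k[d_1, \ldots, d_{n-1}]G$. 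Applying induction to each produces structure theorems; reindexing the summands from $Y$'s decomposition by adjoining $n$ to each index set, and leaving $T$'s decomposition alone, glues into the required structure theorem for $S$.

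The main obstacle is producing those two graded splittings. The finite-indecomposables hypothesis is exactly what makes this possible: because the Krull--Schmidt decomposition of each graded piece $S_m$ uses only finitely many isomorphism types $M_1, \ldots, M_r$, the relevant extension obstructions (of $T$ by $F$, and between $Y$ and its $d_n$-translates) can be controlled uniformly across degrees. I would try to make this precise by choosing, compatibly in the grading, $M_j$-isotypic decompositions of each graded piece, then lifting a $kG$-equivariant section of the surjection $F \twoheadrightarrow F / d_n F$ using that both source and target involve only the same finite family of indecomposable types. Without the finiteness assumption neither splitting is available in general, so this is precisely where the content of the theorem is concentrated.
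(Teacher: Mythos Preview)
The paper does not prove this theorem: it is quoted from \cite{Symonds StrThm} and used only as background to motivate the explicit construction that follows. So there is no proof in the paper to compare against.

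On your proposal itself: the forward direction is fine and is essentially the standard observation. For the converse, your inductive scheme on $n$ with a torsion/torsion-free splitting along $d_n$ is a natural plan, but the two places where you write ``if one can produce a graded $RG$-splitting'' are exactly where the content lies, and your sketch does not actually supply it. Knowing that each graded piece $S_m$ decomposes into finitely many indecomposable types does not by itself force the short exact sequence $0 \to T \to S \to S/T \to 0$ of graded $RG$-modules to split, nor does it force a $k[d_n]G$-equivariant section of $F \twoheadrightarrow F/d_nF$; isotypic decompositions in each degree need not be compatible with multiplication by the $d_i$. You acknowledge this (``I would try to make this precise \ldots''), but as written this is a hope rather than an argument. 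Symonds's actual proof in \cite{Symonds StrThm} handles exactly this compatibility issue, and it is not a routine step; if you want a self-contained argument you will need to engage with that point directly rather than defer it.
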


Note that since we insisted that the $X_I$ are finitely generated it is not the case that
every $S$ trivially has a structure theorem given by $X_{\emptyset} = S|_{kG}$.

Let $S=k[x_1, \dots , x_n]$ be a polynomial ring in $n$ variables graded in degree 1.
With respect to the basis $x_1, \dots , x_n$ of the degree 1 component of $S$,
let $P$ denote a finite subgroup of the upper triangular of matrices with 1's on the diagonal.

\begin{thm} [Karagueuzian and Symonds 2007]
\cite[Theorem 1.1]{Symonds mod str 2} \label{thm: symonds mod str 2}
For $k$ a finite field, $S$ and $P$ as immediately above and $R \subset S^P$ a particular Noether normalization of $S^P$,
 $S$ has a structure theorem over $RP$.
\end{thm}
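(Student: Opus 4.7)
The plan is to invoke Theorem~\ref{thm: symonds strthm} to convert the claim into a finiteness statement: it suffices to show that, across all graded pieces of $S$, only finitely many isomorphism classes of indecomposable $kP$-modules occur as direct summands. Since $P$ sits inside the unipotent upper-triangular matrices over a finite field of positive characteristic $p$, $P$ is a finite $p$-group, so the indecomposable $kP$-modules form the combinatorially rich world of modular $p$-group representations, and the Krull--Schmidt theorem applies to each $S_m$.

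First I would fix a homogeneous system of parameters $f_1, \dots, f_n$ for $S^P$ and set $R = k[f_1, \dots, f_n]$, giving the required Noether normalization of $S^P$. A preliminary step is to check that $S$ is a finitely generated (and, since $P$ is a $p$-group acting on a polynomial ring, Cohen--Macaulay, hence free) graded $R$-module, so that the graded $kP$-module structure of $S$ is controlled by that of the finite-dimensional quotient $k \otimes_R S$ together with the graded shifts coming from the $R$-basis. The finiteness question is thereby reduced to showing that the indecomposable summands occurring in $S$ do not proliferate as the degree grows.

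The principal obstacle, and the hard core of the argument, is this uniform finiteness of indecomposable types. I would try to attack it geometrically: after inverting a suitable product of $P$-invariants one passes to an open subscheme of $\mathrm{Spec}(S)$ on which the stabilisers shrink, and one can pull back summand information from the finitely many conjugacy classes of point stabilisers. Since $P$ has only finitely many subgroups and each such subgroup has only finitely many indecomposable $k$-representations of bounded dimension arising this way, one obtains a candidate finite palette of summands. The genuinely difficult point is showing that the palette assembled from localised or restricted information controls all summands of $S$ globally and in every degree simultaneously; this step requires a delicate cohomological or rank-variety argument specific to $p$-groups acting on polynomial rings, and is where the bulk of the Karagueuzian--Symonds work lies.

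Once such a finite list of indecomposable summands has been produced, Theorem~\ref{thm: symonds strthm} applies directly and yields the structure theorem for $S$ over $RP$.
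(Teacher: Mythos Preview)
This theorem is not proved in the present paper at all: it is quoted from \cite{Symonds mod str 2} as an external result, with no argument supplied. So there is no ``paper's own proof'' to compare your proposal against.

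That said, your proposal has a genuine circularity. You reduce the claim, via Theorem~\ref{thm: symonds strthm}, to the statement that only finitely many isomorphism classes of indecomposable $kP$-modules occur as summands of $S$, and then you concede that establishing this finiteness ``is where the bulk of the Karagueuzian--Symonds work lies.'' In other words, the only substantive step in your outline is deferred back to the very paper whose theorem you are trying to prove. This is not a proof sketch so much as a restatement of the citation.

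There is also a structural mismatch with how \cite{Symonds mod str 2} actually proceeds. Karagueuzian and Symonds do not first prove an abstract finiteness statement and then invoke an existence theorem like Theorem~\ref{thm: symonds strthm}. Rather, they build the structure theorem constructively: they exhibit explicit $kP$-modules $X_I$ (in a manner analogous to the $V_I$ constructed in the present paper for the symmetric group) and show directly that the multiplication map from $\bigoplus_I k[d_I]\otimes_k X_I$ to $S$ is an isomorphism. The finiteness of indecomposable types is then a \emph{consequence} of the explicit structure theorem, not an input to it. Indeed, the present paper uses exactly this direction of implication in deriving Corollary~\ref{corol: k[d] has str}: the structure theorem over $RP$ is used to deduce finiteness of indecomposable $kP$-summands, which is then transported to $G$ via the Sylow subgroup, and only at that final stage is Theorem~\ref{thm: symonds strthm} invoked. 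Your proposed route runs this logic backwards and, absent an independent finiteness argument, does not stand on its own.
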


Any group acting on $S$ with grading preserving algebra automorphisms is defined by its action on the degree 1 component of $S$.
%
Let $P$ be any Sylow-$p$-subgroup of $G$.
It can be shown that we may chose a basis of the degree 1 component of $S$
such that the elements of $P$ are represented by upper triangular matrices with $1$'s on the diagonal.
%
A similar argument to Theorem \ref{thm: symonds mod str 2} (found in the proof of \cite[Corollary 4.2]{Symonds cech})
tells us that $S$ has a structure theorem over $RP$.
Since $P$ is a Sylow-$p$-subgroup of $G$, this tells us that $S$ has finitely many isomorphism classes of indecomposable $kG$-summands.
Hence $S$ has a structure theorem over $RG$.
All together this shows:



\begin{corol}\label{corol: k[d] has str}
Let $k$ be a field of characteristic $p$.
For $S=k[x_1, \dots , x_n]$, with $\deg(x_i)=1$ for $i=1, \dots , n$, $G$ a finite group of grading preserving algebra automorphisms
and $R \subseteq S^G$ a polynomial ring such that $S$ is a finite $R$-module,
$S$ has a structure theorem over $RG$. cf. \cite[Corollary 1.2]{Symonds StrThm}.
\end{corol}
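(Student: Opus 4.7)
The plan is to show that $S|_{kG}$ has only finitely many isomorphism classes of indecomposable summands and then invoke Theorem \ref{thm: symonds strthm} with any polynomial Noether normalization $R \subseteq S^G$.

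First, I would fix a Sylow $p$-subgroup $P \le G$. Because $P$ is a $p$-group acting $k$-linearly on the degree~$1$ component $S_1$ and $\textnormal{char}(k)=p$, the module $S_1$ admits a $P$-stable complete flag; choosing a basis adapted to this flag realizes $P$ inside the group of upper unitriangular matrices. Theorem \ref{thm: symonds mod str 2}, extended from finite fields to arbitrary fields of characteristic $p$ by \cite[Corollary 4.2]{Symonds cech}, then yields a structure theorem for $S$ over $R'P$ for some polynomial subring $R' \subseteq S^P$ over which $S$ is finite. By the forward direction of Theorem \ref{thm: symonds strthm}, this implies that $S|_{kP}$ has only finitely many isomorphism classes of indecomposable summands.

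Second, I would transfer this finiteness from $P$ to $G$. Let $M$ be any indecomposable $kG$-summand of $S$. Since $[G:P]$ is invertible in $k$, Higman's criterion gives $M \mid \Ind_P^G \Res_P^G M$, and $\Res_P^G M$ is itself a $kP$-summand of $S|_{kP}$. By Krull--Schmidt, $\Res_P^G M$ decomposes into indecomposables drawn from the finite list produced above, so $M$ is isomorphic to a summand of $\Ind_P^G N$ for some $N$ in that list. Only finitely many isomorphism classes of indecomposable $kG$-modules arise this way, so $S|_{kG}$ has finitely many isomorphism classes of indecomposable summands. The converse direction of Theorem \ref{thm: symonds strthm} then delivers the desired structure theorem over $RG$.

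The main obstacle is the invocation of Theorem \ref{thm: symonds mod str 2}: it is stated only for finite fields, so one must cite \cite[Corollary 4.2]{Symonds cech} to cover an arbitrary field of characteristic $p$. Once that extension is in hand the remaining transfer from $P$ to $G$ is a routine application of Sylow theory, Higman's criterion, and Krull--Schmidt.
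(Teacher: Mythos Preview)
Your proposal is correct and follows essentially the same route as the paper: pass to a Sylow $p$-subgroup $P$, upper-triangularize, invoke the Karagueuzian--Symonds result (in the form of \cite[Corollary 4.2]{Symonds cech}) to get a structure theorem over $P$, deduce finiteness of indecomposable $kP$-summands via Theorem~\ref{thm: symonds strthm}, transfer this finiteness to $G$, and apply Theorem~\ref{thm: symonds strthm} again. The paper compresses the transfer step into a single sentence, whereas you spell it out via Higman's criterion and Krull--Schmidt; this added detail is sound and clarifies what the paper leaves implicit.
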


\section{Notation}   \label{sec: str for sym}

We now fix notation which we will use for the rest of the paper.

Take $k$ to be any unital ring such that $ab=0$ implies $a=0$ or $b=0$.
Fix an $n \in \N_{>0}$, let $S=k[x_1, \dots , x_n]$ and let $G= \textnormal{Sym}(x_1, \dots , x_n)$
be the symmetric group on the variables $x_1, \dots ,x_n$.
Let $d_i$ be the $i^{\text{th}}$ elementary symmetric polynomial in $x_1, \dots ,x_n$
e.g. $d_1 = x_1 + \dots + x_n$, $d_n = x_1x_2 \dots x_n$ and for $i \in \{1, \dots , n\}$:
\begin{align*}
d_i &= \sum_{g \in G/ \stab_G(x_1 \dots x_i)} g (x_1 \dots x_i).
\end{align*}
Let $R=k[d_1, \dots , d_n]$.
It is well known that the $d_i$ are algebraically independent
(a result sometimes called the fundamental theorem of symmetric polynomials),
 so $R$ is a polynomial $k$-algebra.

Note that $\stab_G(x_1 \dots x_i)$ is the stabilizer of the monomial $x_1 \dots x_i$,
which is the same as the stabilizer of the set $\{x_1 , \dots , x_i\}$.
Elements of this group are made up of a permutation of $x_1, \dots , x_i$ and a permutation of $x_{i+1}, \dots , x_n$.

For any $m \in \N$ let $[m]=\{1,2, \dots , m\}$.
For $I = \{i_1, \dots ,i_m \} \subseteq [n]$, let
$\tilde{d_I}=d_{i_1} \dots d_{i_{m}} = \prod_{i \in I} d_i$, we let $\tilde{d_{\emptyset}}=1$.
Let $d_I$ denote the set $\{ d_i | i \in I  \}$ and $k[d_I]$ denote the polynomial ring $k[d_i|i \in I]$.

Let $\lmlex$ denote the leading monomial in the usual lexicographical ordering on monomials in $x_1, \dots ,x_n$.
For $I = \{i_1, \dots i_m \} \subseteq [n]$ with $n \in I$, set $e_I' = \lmlex( \tilde{d_I})/d_n$ and let $V_I'$ be the $kG$-module generated by $e_I'$.
An element of $R$ of the form $d_{i_1}^{t_1}\dots t_{i_m}^{t_m}$
we will call a $d_I$-monomial, and if $I=[n]$ we may shorten this to a $d$-monomial.
Likewise $x_I$ and $x$-monomials, are elements of $S$ of the form $x_{i_1}^{t_1} \dots x_{i_m}^{t_m}$,
with  $i_j \in I$ and $[n]$ respectively.

Note that if we defined $e_I'' = \lmlex( \tilde{d_I})$ and $V_I'' = \langle e_I'' \rangle$ for any $I \subseteq [n]$,
then for any $I$ with $n \not \in I$ we would have $V_I'' \iso V_{I \cup \{n\}}''$,
and for any $I$ with $n \in I$ we would have $V_I' \iso V_I''$.
So no new isomorphism classes of module occur for $V_I''$ with $n \not \in I$.
In both cases the isomorphism is given by multiplication by $d_n$.

This notation is summarized in the top part of table below,
for now ignore the bottom two rows as $\Glm$ has not yet been defined.

\begin{tabular}{|l|l|}
\hline
$k$ & unital ring such that $ab=0 \implies a=0$ or $b=0$ for all $a,b \in k$ \\
$S$ & $k[x_1, \dots , x_n]$ \\
$G$ & Sym$(x_1, \dots , x_n)$ \\
$d_i$ & $i^{\text{th}}$ elementary symmetric polynomial in $x_1, \dots ,x_n$ \\
& i.e. $d_i = \sum_{g \in G/stab_G(x_1 \dots x_i)} g x_1 \dots x_i$ \\
$R$ & $k[d_1, \dots , d_n]$ \\
$[m]$ &  $\{1,2, \dots , m\}$ \\
$I$ & $I \subseteq [n], I = \{i_1 , \dots , i_{|I|} \}$ \\
$\tilde{d_I}$ & $d_{i_1} \dots d_{i_{|I|}}= \prod_{i \in I} d_i$ for $\{n\}\subseteq I \subseteq [n]$ \\
$d_I$ & $\{ d_i | i \in I\}$\\
$e_I'$ & $\lmlex( \tilde{d_I})/d_n$ for $\{ n \} \subseteq I \subseteq [n]$\\
$V_I'$ & the $kG$-module generated by $e_I'$ for $\{ n \} \subseteq I \subseteq [n]$\\
\hline
$e_I$ & an element of $S$ such that $\Glm(e_I) = \{ \lmlex( \tilde{d_I})/d_n \}$, \\
&  stab$_{G}(e_I)=$stab$_G(\lmlex (e_I) )$ for $\{ n \} \subseteq I \subseteq [n]$ \\
& and the coefficient of the $\succ$-leading monomial is a unit (e.g. $e_I =e_I'$)\\
$V_I$ & the $kG$-module generated by $e_I$ for $\{ n \} \subseteq I \subseteq [n]$\\
\hline
\end{tabular} \\

The result we are aiming for is:
\begin{thm}
With notation as above, we have a structure theorem:
$$
S \iso \Oplus_{\{ n \} \subseteq I \subseteq [n]} k[d_I] \otimes_k V_I'
$$
Where the map from right to left is the $kG$-homomorphism $d \otimes_k v \mapsto dv$
\end{thm}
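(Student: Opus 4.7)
My plan is to verify two things separately: that the map $\phi$ is a degree-preserving surjection, and that both sides have equal $k$-rank in each graded degree. Since both sides are free graded $k$-modules with finitely generated components in each degree and $k$ is a domain (hence Dedekind-finite), a degree-preserving surjection of equal graded rank must be an isomorphism, so these two steps suffice. The $kG$-linearity of $\phi$ is automatic: each $d_i$ is $G$-invariant, so $G$ acts trivially on every $k[d_I]$ and $\phi(d\otimes v)=dv$ is equivariant via its action on $V_I'$.

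For matching ranks I would verify the Hilbert-series identity
$$
\frac{1}{(1-t)^n}
 \;=\; \sum_{\{n\}\subseteq I\subseteq[n]} \frac{|G\cdot e_I'|\cdot t^{\sum_{i\in I\setminus\{n\}} i}}{\prod_{i\in I}(1-t^i)},
$$
reflecting that $V_I'$ is the monomial permutation module $k[G/\stab_G(e_I')]$ concentrated in degree $\sum_{i\in I\setminus\{n\}} i$, and that $k[d_I]$ is a polynomial ring on generators of degrees $\{i:i\in I\}$. The identity follows by a bijection: each monomial $x_1^{b_1}\cdots x_n^{b_n}\in S$ is encoded by its sorted descending exponent partition $\bar b$ together with a coset in $G/\stab(\bar b)$; the partition $\bar b$ decodes uniquely as $\lmlex(\tilde{d_I}^a)$ via the drop-locations and drop-sizes of its parts (with the convention $n\in I$, allowing $a_n=0$), and the sorting cosets match bijectively with elements of $G\cdot e_I'$ by comparing stabilizers.

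The main obstacle is the surjectivity of $\phi$. The leading monomial of $\phi(\tilde{d_I}^a\otimes v)=\tilde{d_I}^a\cdot v$ in lex order equals $\lmlex(\tilde{d_I}^a)\cdot v$, but a naive Gr\"obner-style induction falters for two reasons: distinct basis triples $(I,a,v)$ can share the same lex-leading monomial, and some monomials of $S$ (visible already for $n=3$, e.g.\ $x_2^2$) never occur as leading terms of any basis element. The remedy is that the image of $\phi$ is $G$-stable, and within each $G$-orbit the lex-largest representative is a sorted descending monomial $\bar m$; an explicit combinatorial argument---essentially peeling off an appropriate descending staircase $\lmlex(\tilde{d_I}^a)$ from $\bar m$ so that the complementary factor $\bar m/\lmlex(\tilde{d_I}^a)$ is a member of $G\cdot e_I'$---exhibits $\bar m$ as the leading monomial of a specific basis element. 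Then inducting on lex order, the strictly lower-order terms of $\phi(\tilde{d_I}^a\otimes v)$ are already in the image by the induction hypothesis, so $\bar m$ is too, and $G$-equivariance propagates this to the whole orbit. Combined with the Hilbert-series match, surjectivity then yields the $kG$-isomorphism.
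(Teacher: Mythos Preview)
Your approach is genuinely different from the paper's, and in fact the paper explicitly mentions it as an alternative that was considered and set aside (see the remark immediately after the proof of Theorem~\ref{thm: str thm}): rather than proving injectivity directly, you prove surjectivity and then match Hilbert series.  The paper instead proves both injectivity and surjectivity by hand, using the $G$-orbit-aware leading-monomial order $\succ$ (the function $\Glm$) together with the reduced-form map $\Red$; the key invariant is that every nonzero element of $\phi\bigl(k[d_I]\otimes V_I'\bigr)$ satisfies $\Red(\Glm(\cdot))\approx e_I'$, which separates the summands indexed by different $I$ and yields injectivity without any rank comparison.  Your route packages injectivity into a clean combinatorial identity, while the paper's route avoids the Hilbert-series computation altogether and, crucially, works uniformly over any coefficient ring with no zero-divisors.

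That last point is where your argument has a genuine gap.  Your passage from ``surjective of equal graded rank'' to ``isomorphism'' appeals to Dedekind-finiteness of $k$, but what is actually required is that every surjective $k$-linear endomorphism of $k^m$ be injective, i.e.\ that $M_m(k)$ be Dedekind-finite for all $m$ --- in other words that $k$ be \emph{stably finite}.  This holds for commutative rings and for Noetherian rings, but not for arbitrary domains: Shepherdson constructed domains $R$ admitting $A,B\in M_2(R)$ with $AB=I$ yet $BA\neq I$, giving a surjective non-injective map $R^2\to R^2$.  So in the paper's stated generality (a not-necessarily-commutative unital ring without zero-divisors) your inference fails, and you would have to restrict $k$ or supply a separate injectivity argument --- at which point you are essentially back to the paper's method.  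A smaller issue: your surjectivity ``induction on lex order'' needs care, because the lower-order monomials of $\tilde{d_I}^{\,a}\cdot e_I'$ need not themselves be sorted descending, so one cannot appeal directly to the inductive hypothesis for them via $G$-equivariance unless one knows they lie in strictly $\succ$-smaller $G$-orbits.  That is true, but it is exactly the content of Lemma~\ref{lem: Glm(du) = lm(de_I)}, and once you grant it you are really inducting on $\succ$ rather than on lex, which is precisely the paper's surjectivity proof.
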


This is an immediate corollary of Theorem \ref{thm: str thm},
where $e_I'$ and $V_I'$ are replaced by $e_I$ and $V_I$.

Using $e_I$, rather than $e_I'$, does make the notation a little more messy
but being able to use $e_I$  allows more flexibility. 
It may also be useful for considering localizations of $S$.
For example, assume the $e_I$ version of the theorem holds and fix $r \in [n]$,
then the following choices for $e_I$ are allowed:
$$
e_I = \left\{
\begin{array}{cl}
e_I' & \textnormal{if } r\not \in I \\
d_r e_{I - \{r\}}' & \textnormal{if } r \in I
\end{array} \right.
$$
If $r \not \in I$ then $d_rV_I \subseteq \Image (1_R \otimes_k V_{I \cup \{r\}} )$.
On the other hand if $r \in I$, since the theorem holds, we have $d_rV_I = \Image (d_r \otimes_k V_I)$.
So for all $I \subseteq [n]$ with $n \in I$, we have $d_r V_I \subset \Image\left( k[d_{I \cup \{r\}}] \otimes_k V_{I \cup \{r\}} \right)$.
This tells you that for $S_{d_r}$, the localization of $S$ by $d_r$,
we have a split isomorphism of $kG$-modules:
$$
S_{d_r} \iso \Oplus_{\{I \subseteq [n] | r,n \in I\}} k[d_I][d_r^{-1}] \otimes_k V_I
$$
where the isomorphism from right to left is given by multiplication.

The two main tools we use are the $\succ$-leading monomials and the \emph{reduced form}.

\section{Leading Monomials}

The following definitions are similar to \cite[Section 3 Definition 13]{Kemper}.

\begin{defn}[$\succ ,\succcurlyeq , \approx, M(-),\Glm$]
For two $x$-monomials, $y,z \in S$, pick $g,h \in G$ such that
$gy \geq_{\textnormal{lex}} g'y$ for all $g' \in G$ and
$hz \geq_{\textnormal{lex}} h'z$ for all $h' \in G$,
we say that $y \succcurlyeq z$ if
$gy \geq_{\textnormal{lex}} hz$,
otherwise $y \prec z$.

We say that
$x \approx y$ if $x \preccurlyeq y$ and $y \preccurlyeq x$, i.e. if there exists $g,h \in G$ such that $gx=hy$.

For $u \in S$, define $M(u)$ to be the set of $x$-monomials occurring in $u$ (with non-zero coefficient) and define
$$
\Glm(u) := \{ x \in M(u) | x  \succcurlyeq y  \textnormal{ for all }  y\in M(u) \}
$$

For a set $X$ such that $x \approx y$ for all $x,y \in X$,
write $X \approx m$ if $\forall x \in X, x \approx m$.

Note that $\forall x,y \in \Glm(u), x \approx y$, so $\Glm(u) \approx m$ makes sense.
\end{defn}

Note the distinction between $\Glm(u) \approx m$ and $\Glm(u) = \{ m \}$ for $u,m \in S$, $m$ an $x$-monomial.
The former says that the leading monomials of $u$ in the $\succcurlyeq$ ordering are all equal to $gm$ for some $g \in G$.
The latter says that there is exactly one $n \in M(u)$
which is maximal in the $\succcurlyeq$ ordering and this $n$ is equal to $m$.

Let $e_I$ and $V_I$ be as in the box from Section \ref{sec: str for sym},
i.e. for $I \subseteq [n]$ with $n \in I$: $e_I$ is an element of $S$ such that
$\Glm(e_I) = \{\lmlex (\tilde{d_I})/d_n \}=\{e_I'\}$,
$\stab_G(e_I) = \stab_G(e_I')$ and the coefficient of the $\succ$-leading monomial is a unit;
$V_I$ is the module $kGe_I$.

The condition that $\Glm(e_I) = \{ e_I' \}$ could be relaxed to $\Glm(e_I) = \{ g \cdotp e_I' \}$,
or we could say $\Glm(e_I) \approx \{ e_I' \}$ and $|\Glm(e_I)|=1$.
We gain no benefit from this as the next lemma tells us that for such an $e_I$ we would have $\Glm(g^{-1}e_I) = \{e_I' \}$,
so the $V_I$ obtained in this way are the same.
So we insist that $\Glm(e_I) = \{e_I' \}$.

\begin{lemma}\label{lem: Glm}
Let $d$ be a $d$-monomial considered as an element of $S$ and $u,v,w$ be any elements of $S$ then:
\begin{enumerate}
\item $\lmlex (uv) = \lmlex (u)\lmlex (v)$
\item $\Glm(d) \approx \lmlex(d)$
\item $\Glm(d) = \{g \cdotp \lmlex(d) | g \in G \}$
\item For any $g \in G$ we have $\Glm(u) \approx \Glm(gu)$.
\item Let $m$ be an $x$-monomial with $m \in \Glm(u)$, $\Glm(u) = Gm \cap M(u)$
\item For $\Glm(u)$ and $\Glm(v)$ disjoint, $\Glm (u+v) \subseteq \Glm (u) \cup \Glm(v)$,
in particular $\Glm(u+v) \approx \Glm(u)$ or $\Glm(u+v) \approx\Glm(v)$.
\item $\Glm(gu) = g\Glm(u)$ for all $g \in G$.
\end{enumerate}
\end{lemma}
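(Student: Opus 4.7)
The plan is to handle (1), (4), (5), (7) as essentially bookkeeping from the definitions, to treat (2), (3) by exploiting the $G$-invariance of $d$-monomials, and to focus the real work on (6).

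Part (1) is the usual fact that lex is a monomial order: $y <_{\textnormal{lex}} z$ implies $yw <_{\textnormal{lex}} zw$, so expanding the product picks out $\lmlex(u)\lmlex(v)$. For (7), the key observation is that $\succcurlyeq$ is $G$-invariant, since $y \approx gy$ for every $x$-monomial $y$ and every $g \in G$; combined with the identity $M(gu) = gM(u)$, this gives $\Glm(gu) = g\Glm(u)$. Part (4) is immediate from (7) because $g\Glm(u) \approx \Glm(u)$. For (5) both inclusions unpack directly: $\Glm(u)$ lies in a single $\approx$-class which by the definition of $\approx$ is a subset of $Gm$; conversely if $y \in Gm \cap M(u)$ then $y \approx m$, and since $m$ is $\succcurlyeq$-maximal in $M(u)$, so is $y$.

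For (2) and (3) the crucial point is that a $d$-monomial is $G$-invariant as an element of $S$, so $M(d)$ is a $G$-invariant set. Setting $m = \lmlex(d)$, for any $y \in M(d)$ the lex-maximum of $Gy \subseteq M(d)$ is $\leq_{\textnormal{lex}} m$ while the lex-maximum of $Gm$ is $m$ itself, so $m \succcurlyeq y$ and thus $m \in \Glm(d)$, giving (2); then (5) yields $\Glm(d) = Gm \cap M(d) = Gm$, which is (3).

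The main obstacle is (6), where the disjointness hypothesis is precisely what forbids cancellation among candidate leading terms. Pick $m_u \in \Glm(u)$ and $m_v \in \Glm(v)$, and split on their $\succcurlyeq$-relationship. If, say, $m_u \succ m_v$, then any $y \in \Glm(u)$ satisfies $y \succ m_v$, hence $y \notin M(v)$ and $y$ survives in $u+v$; and any $z \in M(u+v)$ with $z \approx m_u$ must lie in $M(u)$ (otherwise $z \preccurlyeq m_v \prec m_u$), so (5) gives $\Glm(u+v) = \Glm(u)$. In the remaining case $m_u \approx m_v$, an element of $\Glm(u)$ that cancelled in $u+v$ would have to lie in the maximum $\succcurlyeq$-class of $M(v)$, hence in $\Glm(v)$ by (5), contradicting disjointness; so $\Glm(u), \Glm(v) \subseteq M(u+v)$, and the same (5)-plus-cases analysis identifies $\Glm(u+v)$ as the $Gm_u$-class of $M(u+v)$, which splits into $\Glm(u) \cup \Glm(v)$.
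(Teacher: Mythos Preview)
Your proof is correct and follows essentially the same approach as the paper: the same one-line arguments for (1), (4), (5), (7), the same use of $G$-invariance of $d$ for (2) and (3), and the same disjointness-prevents-cancellation idea for (6). The only noticeable difference is organizational --- you deduce (4) from (7) rather than proving (4) directly and then using (5) for (7), and in (6) you explicitly separate the cases $m_u \succ m_v$ and $m_u \approx m_v$, whereas the paper handles both at once under ``without loss of generality $m \succcurlyeq n$'' and leaves the reader to extract $\Glm(u+v) \subseteq \Glm(u) \cup \Glm(v)$ from the fact that $m \in \Glm(u+v)$ via (5); your version is simply a more detailed write-up of the same argument.
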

\begin{proof}
(1) follows from $a \geq_{\textnormal{lex}} b \implies ac \geq_{\textnormal{lex}} bc$ for $x$-monomials $a,b,c$.

(2) and (3) are because $d$ is a $d$-monomial.

(4) is because $m \approx gm$ and $m \succ n \iff gm \succ gn$, for $x$-monomial $m,n$.

(5) if $n \in \Glm(u)$, then  $n \succcurlyeq  m'$ for all $m' \in M(u)$, so in particular $n \succcurlyeq m$.
We already know  $m \succcurlyeq n$, so $m \approx n$, i.e. $n\in Gm$.
Clearly $\Glm(u) \subseteq M(u)$, so $\Glm(x) \subseteq Gm \cap M(x)$.

Conversely, if $n \in Gm \cap M(u)$, then $n \approx m$ and $m \succeq m'$ for all $m' \in M(u)$.
So $n \succeq m'$, for all $m' \in M(u)$, and $n \in M(u)$, so $n \in \Glm(u)$.

(6) for $m \in \Glm(u)$, $n \in \Glm(v)$, without loss of generality let $m \succcurlyeq n$.
Then $m \in M(u+v)$, as $m \not \in M(v)$, and $m \succcurlyeq m'$ for all $m' \in M(u+v)$.

(7) suppose $m \in \Glm(u)$, then
$gm \in Gm \cap M(gu)$ so $gm \in \Glm(gu)$ by part (5).
This shows that $g\Glm(u) \subseteq \Glm(gu)$ and $g^{-1}\Glm(gu) \subseteq \Glm(g^{-1}gu)$.
%
\end{proof}

\begin{lemma} \label{lem: Glm(u)=lm(e_I)}
For $e_I$ and $V_I$ as in the box and $u \in V_I$ with $u \neq 0$,
we have $\Glm(u) \approx e_I'$
\end{lemma}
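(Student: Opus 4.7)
The plan is to expand $u \in V_I = kGe_I$ as a $k$-linear combination of $G$-translates of $e_I$ indexed by $G/H$, where $H := \stab_G(e_I) = \stab_G(e_I')$, and then identify the coefficient of each $x$-monomial that lies in the $G$-orbit of $e_I'$.

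First I would write
\[
u = \sum_{gH \in G/H} b_{gH}\, g e_I, \qquad b_{gH} \in k,
\]
which is well-posed because $h e_I = e_I$ for $h \in H$. Let $c \in k$ denote the coefficient of $e_I'$ in $e_I$, which is a unit by the hypothesis on $e_I$; by $k$-linearity of the $G$-action, the $x$-monomial $g e_I'$ appears in $g e_I$ with the same coefficient $c$. By Lemma \ref{lem: Glm}(7), $\Glm(g e_I) = g\Glm(e_I) = \{g e_I'\}$, so every other $x$-monomial of $g e_I$ is strictly $\prec e_I'$; in particular every $x$-monomial of $u$ is $\preccurlyeq e_I'$.

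The central step is to pin down the coefficient of a fixed $g e_I'$ inside $u$. Suppose the translate $g' e_I$ contributes to this coefficient through some monomial $y \in M(e_I)$ with $g' y = g e_I'$; then $y = (g')^{-1} g\cdotp e_I' \in G e_I'$. By Lemma \ref{lem: Glm}(5) applied to $e_I$, one has $\Glm(e_I) = G e_I' \cap M(e_I) = \{e_I'\}$, forcing $y = e_I'$, and hence $(g')^{-1} g \in \stab_G(e_I') = H$, i.e.\ $g'H = gH$. So the contribution reduces to the single term $b_{gH} g e_I$, and the coefficient of $g e_I'$ in $u$ equals $b_{gH} c$.

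To finish: if $u \neq 0$ then some $b_{gH} \neq 0$, and since $c$ is a unit in a ring with no zero divisors, $b_{gH} c \neq 0$. Thus the $x$-monomial $g e_I' \approx e_I'$ lies in $M(u)$, and because every monomial of $u$ is $\preccurlyeq e_I'$, it lies in $\Glm(u)$; hence $\Glm(u) \approx e_I'$. The only delicate point is the cross-contamination argument — that no lower-order monomial from one translate $g' e_I$ can cancel or create a monomial in the orbit $Ge_I'$ coming from another translate — and this is exactly what Lemma \ref{lem: Glm}(5) combined with the identification $\stab_G(e_I) = \stab_G(e_I')$ is designed to handle.
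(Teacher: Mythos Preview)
Your proof is correct. Both your argument and the paper's begin identically: write $u$ as a $k$-linear combination of $ge_I$ over a transversal of $H=\stab_G(e_I)=\stab_G(e_I')$, and invoke Lemma~\ref{lem: Glm}(7) to get $\Glm(ge_I)=\{ge_I'\}$. From there the paper observes that these singletons are pairwise disjoint (distinct cosets of $\stab_G(e_I')$ give distinct $ge_I'$) and repeatedly applies Lemma~\ref{lem: Glm}(6) to conclude that $\Glm(u)\approx\Glm(g'e_I)$ for some $g'$. You instead pin down the coefficient of a fixed $ge_I'$ in $u$ directly, using Lemma~\ref{lem: Glm}(5) to show that no monomial of any $g'e_I$ with $g'H\neq gH$ can land on $ge_I'$. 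Your route is slightly more concrete and has the merit of making visible exactly where the hypotheses on $k$ (no zero divisors) and on $e_I$ (leading coefficient a unit) enter; the paper's appeal to Lemma~\ref{lem: Glm}(6) relies on the same facts implicitly, via the unstated step $\Glm(\lambda_g\, ge_I)=\{ge_I'\}$ for $\lambda_g\neq 0$.
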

\begin{proof}
As $V_I = kGe_I$, for $T$ a transversal of $\stab_G(e_I)$ in $G$,
any non-zero element of $V_I$ may be expressed uniquely as a sum:
$$
\sum_{g \in T} \lambda_g ge_I,
$$
for $\lambda_g \in k$, with at least one $\lambda_g$ non-zero.

Since $\stab_G(e_I) = \stab_G(e_I')$,
the $T$ we chose above is a transversal of $\stab_G(e_I')$ in $G$.
By definition $\Glm(e_I) = \{ e_I' \}$,
hence by Lemma \ref{lem: Glm}(7), $\Glm(ge_I)=g\Glm(e_I) = \{g \cdot e_I' \}$ .
So for $g,h \in T$ we have that $\Glm(g e_I)$ and $\Glm(h e_I)$ are disjoint when $g \neq h$.

By repeated application of Lemma \ref{lem: Glm} (6),
for $\lambda_g \in k$ with at least one of the $\lambda_g \neq 0$ we have:
$$
\Glm \left( \sum_{g \in T}  \lambda_g ge_I \right) \approx \Glm(g' e_I),
$$
for some $g' \in T$ and by Lemma \ref{lem: Glm}(4), $\Glm(g' e_I) \approx e_I'$ for all $g' \in T$.
\end{proof}

\begin{lemma}\label{lem: Glm(du) = lm(de_I)}
For $e_I$ and $V_I$ as in the box, $d$ a $d_I$-monomial and $u \in S$,
if $\Glm(u)\approx e_I'$,  then $\Glm(du)\approx \lmlex(d)e_I'$.

In particular, for $u \in V_I-\{0\}$ we have: $\Glm(du) \approx \lmlex(d)e_I'$.
\end{lemma}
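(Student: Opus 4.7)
The plan is to decompose the product $du$ via leading orbits. Let $\alpha$ and $\beta$ denote the exponent vectors of $\lmlex(d)$ and $e_I'$ respectively; both are weakly decreasing since $\lmlex(d)$ and $e_I'$ are already lex-maximal in their $G$-orbits. Therefore $\lmlex(d)e_I'$ has exponent vector $\alpha + \beta$, also weakly decreasing, so $\lmlex(d)e_I'$ is itself the lex-maximal representative of its $G$-orbit.

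\textbf{Upper bound.} I would first show every monomial of $du$ is $\preccurlyeq \lmlex(d)e_I'$. For monomials $n \in M(d)$, $m \in M(u)$ with exponent vectors $\vec n$, $\vec m$, the key input is the majorisation inequality $\mathrm{sort}(\vec n + \vec m) \leq_{\mathrm{lex}} \mathrm{sort}(\vec n) + \mathrm{sort}(\vec m)$, proved by a partial-sum argument: the $k$-th partial sum on the left equals $\max_{|S|=k} \sum_{i \in S}(\vec n_i + \vec m_i)$, bounded above by $\sum_{i \in S_1} \vec n_i + \sum_{i \in S_2}\vec m_i$ for appropriate $k$-subsets $S_j$, giving majorisation, and majorisation with equal totals gives the lex inequality. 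The hypotheses $n \in M(d)$, $m \in M(u)$ give $\mathrm{sort}(\vec n) \leq_{\mathrm{lex}} \alpha$ and $\mathrm{sort}(\vec m) \leq_{\mathrm{lex}} \beta$, and translation invariance of lex under adding a common vector gives $\mathrm{sort}(\vec n) + \mathrm{sort}(\vec m) \leq_{\mathrm{lex}} \alpha + \beta$. Chaining these yields $\mathrm{sort}(\vec n + \vec m) \leq_{\mathrm{lex}} \alpha + \beta$, i.e.\ $nm \preccurlyeq \lmlex(d) e_I'$, so every monomial of $du$ is $\preccurlyeq \lmlex(d) e_I'$.

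\textbf{Realising the leading orbit.} $G$-invariance of $d$ gives $g(du) = d(gu)$, so $\Glm(du)$ is determined up to $\approx$ under $G$-translations of $u$. Because $\Glm(u) \approx e_I'$, some translate $ge_I'$ appears in $M(u)$; after replacing $u$ by $g^{-1}u$ I may assume the coefficient $b_{e_I'}$ of $e_I'$ in $u$ is non-zero. It then suffices to show that the coefficient of $\lmlex(d)e_I'$ in $du$ equals $b_{e_I'}$, for then the upper bound places $\lmlex(d)e_I' \in \Glm(du)$.

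\textbf{The main obstacle.} Computing this coefficient reduces to showing $(n,m) = (\lmlex(d), e_I')$ is the only pair in $M(d) \times M(u)$ with $nm = \lmlex(d)e_I'$. Given such a pair, the chain $\mathrm{sort}(\vec n + \vec m) \leq_{\mathrm{lex}} \mathrm{sort}(\vec n) + \mathrm{sort}(\vec m) \leq_{\mathrm{lex}} \alpha + \beta$ is forced to equality because the outer terms both equal $\alpha + \beta$; a coordinate-by-coordinate comparison then gives $\mathrm{sort}(\vec n) = \alpha$ and $\mathrm{sort}(\vec m) = \beta$, placing $n$ and $m$ in the $G$-orbits of $\lmlex(d)$ and $e_I'$, so $n = g'\lmlex(d)$ and $m = he_I'$ for some $g', h \in G$. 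The relation $g'\alpha + h\beta = \alpha + \beta$ rearranges to $\alpha - g'\alpha = h\beta - \beta$; partial sums $\sum_{j \leq k}(\alpha_j - (g'\alpha)_j) \geq 0$ (top-$k$ sums attain their maximum on sorted vectors), while $\sum_{j \leq k}((h\beta)_j - \beta_j) \leq 0$ by the analogous inequality for $\beta$, so equality forces every partial sum to vanish, giving $g'\alpha = \alpha$ and $h\beta = \beta$, hence $n = \lmlex(d)$ and $m = e_I'$. The lex-leading coefficient of the $d_I$-monomial $d = \prod d_i^{t_i}$ is $1$ (lex is a monomial order and each $d_i$ has lex-leading coefficient $1$), so the coefficient of $\lmlex(d)e_I'$ in $du$ is $1 \cdot b_{e_I'} \neq 0$. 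Combined with the upper bound this gives $\Glm(du) \approx \lmlex(d)e_I'$. The ``in particular'' statement for $u \in V_I \setminus \{0\}$ follows immediately from Lemma \ref{lem: Glm(u)=lm(e_I)}.
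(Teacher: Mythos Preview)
Your argument is correct, but it takes a considerably longer path than the paper's. The paper sidesteps your entire ``main obstacle'' section with a single stroke: after translating $u$ so that $\lmlex(u)=e_I'$, Lemma~\ref{lem: Glm}(1) (multiplicativity of $\lmlex$) gives $\lmlex(du)=\lmlex(d)\lmlex(u)=\lmlex(d)e_I'$ directly, which already places $\lmlex(d)e_I'$ in $M(du)$ without any need to analyse which pairs $(n,m)\in M(d)\times M(u)$ multiply to it. Your uniqueness-of-factorisation argument via majorisation and partial sums is valid but redundant once that lemma is available. For the upper bound, the paper also exploits the specific structure of $d$ as a product $\prod d_i^{t_i}$: any monomial of $du$ factors as $\bigl(\prod_{i,j} g_{i,j}\lmlex(d_i)\bigr)\cdot a$ with $a\in M(u)$, and one compares factor by factor against $\bigl(\prod_i \lmlex(d_i)^{t_i}\bigr)\cdot e_I'$ using only the elementary fact that lex is multiplicative. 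Your majorisation inequality $\mathrm{sort}(\vec n+\vec m)\leq_{\mathrm{lex}}\mathrm{sort}(\vec n)+\mathrm{sort}(\vec m)$ achieves the same end but treats $d$ as a black-box $G$-invariant element rather than using its product decomposition. What your route buys is a self-contained argument that would generalise to any $G$-invariant $d$ with leading coefficient a unit; what the paper's route buys is brevity and transparency for the case actually needed.
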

\begin{proof}
Take $m \in \Glm(u)$,  there exists a $g \in G$ such that $gm= e_I'$.
By Lemma \ref{lem: Glm}(7) $g\Glm(u)=\Glm(gu)$, and by Lemma \ref{lem: Glm}(4),
$\Glm(gu) \approx \Glm(u)$.
So we may assume $e_I' \in \Glm(u)$ and $\lmlex(u)=e_I'$.
Hence $\lmlex(d)e_I' = \lmlex(du)$ by Lemma \ref{lem: Glm}(1),
in particular $\lmlex(d)e_I'\in M(du)$.
So it is sufficient to show that $\lmlex(d)e_I' \succeq n$ for all $n \in M(du)$.

For $d = d_1^{t_1}\dots d_n^{t_n}$:
$$
M(du) = \left\{ \left. \left(\prod_{i = 1}^n \prod_{j=1}^{t_i} g_{i,j}\lmlex(d_i)\right) a \right| a \in M(u), g_{i,j} \in G\right\}
$$
That $\Glm(u) \approx e_I'$,
implies that for all $h \in G$ and all $a \in M(u)$, we have
$e_I' \geq_{\textnormal{lex}} ha$.
Clearly $\lmlex(d_i) \geq_{\textnormal{lex}} g_{i,j}\lmlex(d_i)$,
so $\lmlex(d)e_I' \geq_{\textnormal{lex}} hn$ for all $n \in M(du)$.

The ``in particular'' statement follows from Lemma \ref{lem: Glm(u)=lm(e_I)}.
\end{proof}

\section{Reduced Form}

The following definition is equivalent to \cite[Section 3 Definition 10]{Kemper},
where it is described as a generalization of G\"{o}bel's concept of ``special'' terms.

\begin{defn}
For an $x$-monomial $m \in S, m= x_1^{r_1} \dots x_n^{r_n}$,
 the \emph{reduced form} of $m$,
$\Red(m)$, is the $x$-monomial $x_1^{r'_1} \dots x_n^{r'_n}$ where:
\begin{itemize}
\item $| \{ r'_i | i = 1, \dots , n\} | = | \{ r_i | i = 1, \dots , n\} |= a \leq n$
\item $\{ r'_i | i = 1, \dots , n\} = \{0, 1, \dots , a-1 \} $
\item $r'_i < r'_j \iff r_i < r_j$ for all $i,j$.
\end{itemize}
We say that an $x$-monomial, $m$, is \emph{in reduced form} if $m = \Red(m)$.
\end{defn}

Note that for every $x$-monomial in $S$, $m$, there exists a $g \in G$ such that $gm$ is the leading $x$-monomial of some $d$-monomial.
This is simply the observation that every $x$-monomial $m=x_1^{m_1} \dots x_n^{m_n}$ with $m_1\geq m_2 \geq \dots \geq m_n$
can be written as $x_1^{a_1} (x_1x_2)^{a_2}(x_1x_2x_3)^{a_3} \dots (x_1 \dots x_n)^{a_n}$.
The idea of this definition is that the reduced form of $m$
tells us which $d_i$ occur at least once in this $d$-monomial by looking at when the powers change.
For example: the reduced form of $x_1^4x_2^4x_3$ is $x_1^2x_2^2x_3$ and this is the leading monomial of $d_2d_3$.
Another example is $\Red(x_2^2x_3^3)=x_2x_3^2$, which the group element $(x_1,x_3)$ applied to the leading monomial of $d_1d_2$.

We show, in Corollary \ref{lem: Red(d_Ie_I)=lm(e_I)},
that one way to think of $\Red(d)$, for $d$ a $d$-monomial, is to
write out the product of the leading monomials of the $d_i$'s vertically,
then get rid of the repetitions and the $d_n$'s.
For example, let $I = \{ i_1, \dots i_{a} \}, i_1 < i_2 < \dots < i_{a}=n$
and $d=d_{i_1}^{t_1} \dots d_{i_a}^{t_a}$ we may write $\lmlex(d)$ as:
\begin{align*}
\lmlex(d_{i_1})^{t_1} &\left\{
\begin{array}{c}
x_1 \dots x_{i_1} \\
x_1 \dots x_{i_1}
\end{array}
\right.\\
\lmlex(d_{i_2})^{t_2} &\left\{
\begin{array}{c}
x_1 \dots x_{i_1} \dots x_{i_2} \\
x_1 \dots x_{i_1} \dots x_{i_2}
\end{array}
\right.\\
\vdots \;\;\;\;\;\;\;\;\;\;\;&\left\{
\begin{array}{c}
x_1 \dots x_{i_1} \dots x_{i_2} \dots x_{i_j} \\
x_1 \dots x_{i_1} \dots x_{i_2} \dots x_{i_j}
\end{array}
\right.\\
\lmlex(d_{i_{a-1}})^{t_{a-1}} &\left\{
\begin{array}{c}
x_1 \dots x_{i_1} \dots x_{i_2} \dots x_{i_*} \dots x_{i_{a-1}} \\
x_1 \dots x_{i_1} \dots x_{i_2} \dots x_{i_*} \dots x_{i_{a-1}}
\end{array}
\right.\\
\lmlex(d_{n})^{t_n} &\left\{
\begin{array}{c}
x_1 \dots x_{i_1} \dots x_{i_2} \dots x_{i_*} \dots x_{i_{a-1}}\dots x_n \\
x_1 \dots x_{i_1} \dots x_{i_2} \dots x_{i_*} \dots x_{i_{a-1}}\dots x_n.
\end{array}
\right.
\end{align*}

So the reduced form is just:
$$
\begin{array}{ccl}
 && x_1 \dots x_{i_1} \\
 && x_1 \dots x_{i_1} \dots x_{i_2} \\
 && x_1 \dots x_{i_1} \dots x_{i_2} \dots x_{i_*}\\
 && x_1 \dots x_{i_1} \dots x_{i_2} \dots x_{i_*} \dots x_{i_{a-1}},\\
\end{array}
$$
which is clearly $\lmlex(d_{i_1} \dots d_{i_{a-1}})$.

\begin{lemma}\label{lem: red iff i>j}
For $x$-monomials $m=x_1^{m_1} \dots x_n^{m_n}$ and $r=x_1^{r_1} \dots x_n^{r_n}$,
$\Red(m) = \Red(r)$ if and only if we have $(m_i > m_j) \iff (r_i > r_j)$.
\end{lemma}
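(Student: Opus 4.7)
The plan is to characterize $m'_i$ in a way that depends only on the pairwise strict inequalities among the $m_j$'s. Specifically, I claim that $m'_i$ equals the number of distinct values in the set $\{m_1, \ldots, m_n\}$ that are strictly less than $m_i$. Granting this, both directions of the equivalence fall out quickly. A small but essential observation along the way is that the strict order data $(m_i > m_j)$ also determines the equality pattern, since $m_i = m_j$ is equivalent to neither $m_i > m_j$ nor $m_j > m_i$ holding; so the hypothesis $(m_i > m_j) \iff (r_i > r_j)$ automatically yields $(m_i = m_j) \iff (r_i = r_j)$.

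To verify the claim, note that by condition 2 of the definition of $\Red$, the multiset $\{m'_1, \ldots, m'_n\}$ is $\{0, 1, \ldots, a-1\}$ with equal values grouped exactly as in $\{m_1, \ldots, m_n\}$ (this grouping is forced by condition 3, which is a strict biconditional on $<$ and hence also on $=$). Condition 3 then says the correspondence $m_i \mapsto m'_i$ is an order-isomorphism between the distinct values of $m$ and the set $\{0, 1, \ldots, a-1\}$, so $m'_i$ must be the rank of $m_i$ among the distinct values, which is precisely the count described.

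For the forward direction, $\Red(m) = \Red(r)$ means $m'_i = r'_i$ for each $i$, and then chaining condition 3 for $m$ and for $r$ gives $m_i > m_j \iff m'_i > m'_j = r'_i > r'_j \iff r_i > r_j$. For the reverse direction, under the hypothesis the equality pattern and the strict order on the equivalence classes of equal exponents agree between $m$ and $r$; in particular both have the same number $a$ of distinct values, and the ranks of corresponding entries agree. By the rank formula, $m'_i = r'_i$ for all $i$, so $\Red(m) = \Red(r)$. The only mildly subtle point — hardly a real obstacle — is noting that the equality pattern is encoded in the strict-inequality data; once that is in place the rank characterization does all the work.
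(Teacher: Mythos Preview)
Your proof is correct and follows essentially the same approach as the paper's: the forward direction is identical (chaining condition 3 through $m'_i = r'_i$), and for the converse both arguments amount to showing that the reduced exponents take values in the same set $\{0,1,\dots,a-1\}$ with matching order relations. Your rank characterization of $m'_i$ simply makes explicit what the paper leaves implicit in its rather terse converse (which concludes only that the value sets $\{m'_i\}$ and $\{r'_i\}$ coincide without spelling out why $m'_i = r'_i$ pointwise).
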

\begin{proof}
Let $\Red(m) = x_1^{m'_1} \dots x_n^{m'_n}$ and $\Red(r) = x_1^{r'_1} \dots x_n^{r'_n}$.
If $\Red(m) = \Red(r)$ then $m'_i = r'_i$ and $(m_i > m_j) \iff (m'_i > m'_j) \iff (r'_i > r'_j) \iff (r_i > r_j)$.

For the converse:
if $(m_i > m_j) \iff (r_i > r_j)$, then $(m'_i > m'_j) \iff (r'_i > r'_j)$,
and the longest increasing chain of $m'_i$ is the same length as the longest increasing chain of $r'_i$.
Hence $\{ r'_i | i = 1, \dots , n\} = \{ m'_i | i = 1, \dots , n\}$.
\end{proof}

\begin{lemma}\label{lem: red=lmlex}
For an $x$-monomial, $m$,
$\Red(m) \approx e_I'$ for some $I \subseteq [n]$, $n \in I$.
\end{lemma}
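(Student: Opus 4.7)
The plan is to reduce the claim to the observation that $\approx$ is precisely the equivalence relation ``same $G$-orbit'' on $x$-monomials (since $G$ is a group, $gx = hy$ for some $g,h$ is equivalent to $y \in Gx$), so it suffices to exhibit some $I \subseteq [n]$ with $n \in I$ such that $e_I'$ and $\Red(m)$ have the same multiset of exponents. The heart of the argument is that by the very definition of $\Red$, the multiset of exponents of $\Red(m)$ is tailor-made to match that of some $e_I'$.

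More concretely, write $\Red(m) = x_1^{r'_1}\cdots x_n^{r'_n}$, and let $a$ be the number of distinct values among the $r'_i$, so that $\{r'_1, \dots, r'_n\} = \{0, 1, \dots, a-1\}$. Choose a permutation $g \in G$ that sorts these exponents into non-increasing order; then $g \cdot \Red(m) = x_1^{s_1}\cdots x_n^{s_n}$ with $s_1 \geq s_2 \geq \dots \geq s_n$, where $s_1 = a-1$, $s_n = 0$, and every intermediate value in $\{0, 1, \dots, a-1\}$ occurs. Hence there are uniquely determined indices $0 < i_1 < i_2 < \dots < i_{a-1} < n$ such that $s_k = a-1-j$ exactly for $i_j < k \leq i_{j+1}$ (with $i_0 = 0$ and $i_a = n$).

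Set $I = \{i_1, \dots, i_{a-1}, n\}$; by construction $n \in I$. Comparing with the block-wise description of $e_I'$ given earlier in the paper (the reduced-form display showing $e_I' = x_1^{a-1}\cdots x_{i_1}^{a-1}x_{i_1+1}^{a-2}\cdots x_{i_{a-1}}^1 \cdots x_n^0$), I see that $g \cdot \Red(m) = e_I'$ on the nose. Therefore $\Red(m) \approx e_I'$, which is the desired conclusion.

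There is no real obstacle: the argument is essentially bookkeeping, and the ``hard'' step is only the observation that the defining constraints on $\Red(m)$ (distinct values forming exactly $\{0, 1, \dots, a-1\}$, with the order relation inherited from $m$) are precisely the constraints that characterize, up to $G$-action, the leading-monomial-divided-by-$d_n$ shape $e_I'$ for $n \in I$. Once one sorts the exponents and reads off the positions where they strictly drop, the set $I$ drops out automatically.
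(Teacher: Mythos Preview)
Your proposal is correct and follows essentially the same approach as the paper's proof: both apply a permutation $g\in G$ to sort the exponents of $\Red(m)$ into non-increasing order, then read off the set $I$ from the positions where the exponent strictly drops, and finally identify the sorted monomial with $e_I'$ via its block description. The only difference is cosmetic (you take $a$ to be the number of distinct exponent values, whereas the paper's $a$ is one less), so there is nothing of substance to add.
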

\begin{proof}
This follows directly from the definition.
Let $m'$ be a monomial in reduced form,
$m'=x_1^{m'_1}\dots x_n^{m'_n}$ and $\{m'_1, \dots , m'_n \} = \{0 , \dots,  a\}$.
Then there exists a $g \in G$ such that $gm' = m'' = x_1^{m''_1}\dots x_n^{m''_n}$ with
$m''_1 \geq m''_2 \geq \dots \geq m''_n=0$.
We may write:
$$
m'' = (x_1 \dots x_{i_1})^{a} (x_{1+i_2} \dots x_{i_2})^{a-1} \dots (x_{1+i_{a-1}} \dots x_{i_{a}})^1(x_{1+i_{a}} \dots x_n)^0.
$$
But this is equal to:
$\lmlex(d_{i_1}) \lmlex(d_{i_2}) \dots \lmlex(d_{i_{a}})$,
and so: 
$m' \approx m''  = e_{\{i_1, \dots , i_{a},n\} }'$.
\end{proof}

\begin{lemma}\label{lem: x=y => red(x)=red(y)}
For $x$-monomials $x,y \in S$:
$\Red(gx) = g \Red(x)$ and $x \approx y$ implies $\Red(x) \approx \Red(y)$.
cf. \cite[Section 3 Lemma 12]{Kemper}.
\end{lemma}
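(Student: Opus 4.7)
The plan is to reduce both claims to the characterization in Lemma \ref{lem: red iff i>j}. The key observation is that the reduced form depends only on the pattern of strict inequalities among the exponents, and permuting the variables (i.e., relabeling positions) leaves that pattern intact.

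First I would fix notation: write $x = x_1^{r_1}\cdots x_n^{r_n}$ and $\Red(x) = x_1^{r'_1}\cdots x_n^{r'_n}$, and identify $g \in G$ with a permutation $\sigma$ of $[n]$ via $g\cdot x_i = x_{\sigma(i)}$, so that $gx = x_1^{s_1}\cdots x_n^{s_n}$ with $s_i = r_{\sigma^{-1}(i)}$, and likewise $g\Red(x) = x_1^{s'_1}\cdots x_n^{s'_n}$ with $s'_i = r'_{\sigma^{-1}(i)}$. Since $\{r'_1, \dots, r'_n\} = \{0, 1, \dots, a-1\}$ by definition of reduced form, the same holds for $\{s'_1, \dots, s'_n\}$, so $g\Red(x)$ is itself in reduced form; equivalently, $\Red(g\Red(x)) = g\Red(x)$. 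Thus it suffices to show $\Red(gx) = \Red(g\Red(x))$.

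For this I would invoke Lemma \ref{lem: red iff i>j}: one needs $s_i > s_j \iff s'_i > s'_j$ for all $i,j$. Substituting the formulas above, this is exactly $r_{\sigma^{-1}(i)} > r_{\sigma^{-1}(j)} \iff r'_{\sigma^{-1}(i)} > r'_{\sigma^{-1}(j)}$, which is the defining property of $\Red(x)$ applied at the two indices $\sigma^{-1}(i), \sigma^{-1}(j)$. This gives $\Red(gx) = g\Red(x)$.

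The second assertion is then immediate: if $x \approx y$ then there exist $g,h \in G$ with $gx = hy$, so applying $\Red$ to both sides and using the first part yields $g\Red(x) = h\Red(y)$, which means $\Red(x) \approx \Red(y)$. No step here should be a real obstacle; the only mild bookkeeping challenge is keeping the direction of the $\sigma$-action on indices consistent when one passes from the action on variables to the action on exponent vectors, which is why I would phrase everything in terms of the defining $\iff$-characterization of Lemma \ref{lem: red iff i>j} rather than trying to manipulate the exponent vectors directly.
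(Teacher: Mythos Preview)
Your proof is correct and follows essentially the same route as the paper: both arguments reduce the equivariance of $\Red$ to the characterization in Lemma~\ref{lem: red iff i>j} by tracking how a permutation relabels exponent positions, and both derive the second assertion directly from the first. The only cosmetic difference is that the paper compares $\Red(g^{-1}x)$ with $g^{-1}\Red(x)$ directly, whereas you first note that $g\Red(x)$ is itself in reduced form and then compare $\Red(gx)$ with $\Red(g\Red(x))$; these are the same computation.
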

\begin{proof}
We first show that $\Red(g^{-1}x) = g^{-1} \Red(x)$ for any $g \in G$, this of course shows that $\Red(gx) = g \Red(x)$.
Let $x=x_1^{r_1} \dots x_n^{r_n}$ and $\Red(x) = x_1^{r'_1} \dots x_n^{r'_n}$.
For $\Red(g^{-1}x) = x_1^{s_1} \dots x_n^{s_n}$ and $g^{-1} \Red(x) = x_1^{t_1} \dots x_n^{t_n}$,
we must have that
$\{ r'_i | i = 1, \dots , n\} = \{ s_i | i = 1, \dots , n\} = \{ t_i | i = 1, \dots , n\} = \{0,1, \dots , a-1 \}$,
so by Lemma \ref{lem: red iff i>j} it remains to show that $s_i > s_j$ if and only if $t_i > t_j$.

We defined $G$ as acting on $\{x_1, \dots , x_n \}$,
this gives us an action on $\{1, \dots , n \}$ via $gx_i = x_{g(i)}$.
In this notation $g^{-1}(x_1^{r_1} \dots x_n^{r_n}) = x_1^{r_{g(1)}}\dots x_n^{r_{g(n)}}$.
Hence $t_i = r'_{g(i)}$, so $t_i>t_j$ if and only if $r'_{g(i)} > r'_{g(j)}$
and by the definition of $\Red(x)$ this is if and only if $r_{g(i)} > r_{g(j)}$.
Likewise the definition of $\Red(g^{-1}x)$ states that $s_i > s_j$ if and only if $r_{g(i)} > r_{g(j)}$.
Hence $\Red(gx) = g \Red(x)$ by Lemma \ref{lem: red iff i>j}.

To show that $x \approx y$ implies $\Red(x) \approx \Red(y)$,
note that if $gx=hy$ then $\Red(gx) = \Red(hy)$.
So by the above, $g\Red(x) = h\Red(y)$, which is the same as saying $\Red(x) \approx \Red(y)$.
\end{proof}

\begin{defn}
If $X$ is a set of $x$-monomial such that $x \approx y$ for all $x,y \in X$ (e.g. $X = \Glm (u)$),
then by $\Red(X)$ we mean $\{ \Red(x) | x \in X\}$.
\end{defn}
Note that by Lemma \ref{lem: x=y => red(x)=red(y)},
if $\forall x,y \in X, x \approx y$ then $\forall x',y' \in \Red(X), x' \approx y'$,
so it makes sense to talk about $\Red(X) \approx m$ when $x \approx y$ for all $x,y \in X$.

\begin{lemma}\label{lem: Red(d_iu)=lm(e_I)}
Let $e_I'$ be as in the box and let $u \in S$ be such that $\Red(\lmlex(u)) = e_I'$.
Then for all $t \in I$ we have $\Red(\lmlex(d_t u)) = e_I'$.
\end{lemma}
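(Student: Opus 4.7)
The plan is to reduce the statement to the comparison criterion of Lemma~\ref{lem: red iff i>j}, which says that two $x$-monomials have the same reduced form if and only if their exponent vectors induce the same strict order on $\{1,\dots,n\}$. So I only need to track how multiplication by $\lmlex(d_t) = x_1\cdots x_t$ affects the order of exponents of $\lmlex(u)$.

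First I would make the staircase shape of $e_I'$ explicit. Writing $I=\{i_1<i_2<\dots<i_m=n\}$, the earlier computation in the paper shows
\[
e_I' \;=\; x_1^{m-1}\cdots x_{i_1}^{m-1}\,x_{i_1+1}^{m-2}\cdots x_{i_2}^{m-2}\,\cdots\, x_{i_{m-2}+1}^{1}\cdots x_{i_{m-1}}^{1}\,x_{i_{m-1}+1}^{0}\cdots x_n^{0}.
\]
So if $e_I' = x_1^{e_1}\cdots x_n^{e_n}$, the sequence $(e_1,\dots,e_n)$ is weakly decreasing with strict drops exactly at the positions $i_1,i_2,\dots,i_{m-1}$. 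Writing $\lmlex(u) = x_1^{r_1}\cdots x_n^{r_n}$, the hypothesis $\Red(\lmlex(u))=e_I'$ combined with Lemma~\ref{lem: red iff i>j} yields $r_k>r_l \iff e_k>e_l$ for all $k,l$.

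Next, by Lemma~\ref{lem: Glm}(1) we have $\lmlex(d_t u)=\lmlex(d_t)\lmlex(u)=x_1^{r_1+1}\cdots x_t^{r_t+1} x_{t+1}^{r_{t+1}}\cdots x_n^{r_n}$; call the new exponent vector $(r_1',\dots,r_n')$. By Lemma~\ref{lem: red iff i>j} it suffices to show that $r_k'>r_l' \iff e_k>e_l$. Since $t\in I$ we may write $t=i_s$, so the cut point between the indices that get bumped and those that do not coincides with one of the staircase breaks of $e_I'$. A short case analysis handles the comparison of $r_k'$ and $r_l'$: if $k,l$ both lie in $[1,t]$ or both in $[t+1,n]$ the increment cancels and the comparison reduces to $r_k$ vs $r_l$; if $k\le t<l$ then $e_k>e_l$ (since the drop at $i_s$ lies between them), hence $r_k>r_l$, hence $r_k'=r_k+1>r_l=r_l'$, and conversely if $r_k'>r_l'$ while $e_k\le e_l$ this would force $r_k\le r_l$ and so $r_k'\le r_l+1$, which combined with $r_l'=r_l$ gives $r_k'\le r_l'+1$; the strict inequality $r_k'>r_l'$ plus $r_k,r_l\in\Z$ forces $r_k'=r_l'+1$ while $r_k\le r_l$, contradicting $r_k'=r_k+1$. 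The symmetric case $l\le t<k$ is identical.

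The main obstacle is really just keeping the case analysis straight, since the argument is an elementary but fiddly comparison. The essential content, which I would emphasize, is that the hypothesis $t\in I$ is exactly what guarantees that the shift introduced by $\lmlex(d_t)$ respects the staircase structure of $e_I'$; had $t\notin I$ the cut would fall inside a level of the staircase and the relative order of exponents would change, breaking the conclusion. Once the case analysis is done, Lemma~\ref{lem: red iff i>j} packages the result as $\Red(\lmlex(d_t u))=e_I'$.
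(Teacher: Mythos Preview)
Your proposal is correct and follows essentially the same route as the paper: both reduce via Lemma~\ref{lem: red iff i>j} to checking that the exponent vector of $\lmlex(d_t)\lmlex(u)$ induces the same strict order on $\{1,\dots,n\}$ as that of $e_I'$, then make the staircase shape of $e_I'$ explicit and run the three-way case split (both indices $\le t$, both $>t$, or one on each side), using $t\in I$ precisely in the mixed case. One small remark: in your case $k\le t<l$, once you have observed that $e_k>e_l$ holds unconditionally (because $t=i_s$ is a breakpoint of the staircase), the biconditional $r_k'>r_l'\iff e_k>e_l$ is already established and your subsequent ``conversely'' argument is superfluous; as written that converse does not actually reach a contradiction (you end up with $r_k=r_l$, which is compatible with $r_k\le r_l$), but since the hypothesis $e_k\le e_l$ is vacuous there this does not affect the validity of the proof.
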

\begin{proof}
Let $m= \lmlex(u)$ with $m=x_1^{m_1} \dots x_n^{m_n}$ and $e_I' = x_1^{r_1} \dots x_n^{r_n}$.
$\Red(m) = e_I'$ implies $m_i > m_j \iff r_i > r_j$ by Lemma \ref{lem: red iff i>j}.

For $I = \{i_1, \dots , i_a,n\}$ with $1 \leq i_1< i_2 < \dots < i_a < n$,
by definition we have $e_I' = \lmlex(\prod_{j=1}^{a} d_{i_j})$.
By Lemma \ref{lem: Glm}(1) this is equal to
$(x_1 \dots x_{i_1}) (x_1 \dots x_{i_2}) \dots (x_1 \dots x_{i_{a-1}})$.
Collecting all the powers of $x_i$ together we get
\begin{equation}\label{eqn: red}
e_I' = (x_1 \dots x_{i_1})^{a} (x_{i_1 + 1} \dots x_{i_2})^{a-2} \dots (x_{i_{a-2}+1} \dots x_{i_{a}})^1
\end{equation}
So for $i,j \in [n]$ with $i>j$, we have $r_i > r_j$ if and only if $\exists l \in I$ such that $i \geq l > j$.
Hence $m_i > m_j$ if and only if $\exists l \in I$ such that $i \geq l > j$.

By Lemma \ref{lem: Glm}(1) $\lmlex(d_t u) = \lmlex(d_t) \lmlex(u) = \lmlex(d_t) m$.
Let $\lmlex(d_t u) = x_1^{m_1'} \dots x_n^{m_n'}$, then:
$$
x_1^{m_1'} \dots x_n^{m_n'}= \lmlex(d_t u) = (x_1 \dots x_t)m = x_1^{1+m_1} \dots x_t^{1+m_t}x_{t+1}^{m_{t+1}} \dots x_n^{m_n}.
$$
We now compare $(m_i, m_j)$ and $(m_i',m_j')$ for any pair of $i,j \in [n]$.

{For $i,j \leq t$:} we have $m_i' = m_i+1$ and $m_j'= m_j+1$,
so we have $(m_i'>m_j') \iff (m_i > m_j)$.

{For $t < i,j$:} likewise we have $m_i'=m_i$ and $m_j'=m_j$,
so we have $(m_i'>m_j') \iff (m_i > m_j)$.

{For $i \leq t < j$:} we have $m_i' = m_i+1$ and $m_j' = m_j$, so $m_i'> m_j'$.
But, by the observation following Equation \ref{eqn: red} and the fact that $t \in I $, we also have $m_i > m_j$.

Hence  by Lemma \ref{lem: red iff i>j}, $\Red(\lmlex(d_t u)) = \Red(\lmlex(u)) = e_I'$.
\end{proof}

\begin{corol}\label{lem: Red(d_Ie_I)=lm(e_I)}
For $\{n\} \subseteq I \subseteq [n]$, $e_I$ and  $e_I'$ as defined in the box and $d$ a $d_I$-monomial:
$\Red(\lmlex (d )e_I' ) = e_I'$.
\end{corol}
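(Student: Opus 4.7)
The plan is to prove this by induction on the total degree of the $d_I$-monomial $d$, leveraging Lemma \ref{lem: Red(d_iu)=lm(e_I)} as the inductive engine.

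First I would handle the base case $d = 1$. Here $\lmlex(d) e_I' = e_I'$, so the claim reduces to $\Red(e_I') = e_I'$, i.e., that $e_I'$ is already in reduced form. This is immediate from the explicit description (essentially Equation \ref{eqn: red} in the preceding proof): writing $I = \{i_1, \dots, i_{a-1}, n\}$ with $i_1 < \dots < i_{a-1} < n$, the monomial $e_I' = \lmlex(d_{i_1} \cdots d_{i_{a-1}})$ has exponents that take exactly the values $0, 1, \dots, a-1$ (in $n$ slots, with the values constant on the blocks $[i_{j-1}+1, i_j]$). So $|\{r'_i\}| = a = |I|$, the exponent set is $\{0, 1, \dots, a-1\}$, and the order condition holds trivially, which is the definition of being in reduced form.

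For the inductive step, write $d = d_t \cdot d'$ where $t \in I$ and $d'$ is a $d_I$-monomial of strictly smaller total degree. By Lemma \ref{lem: Glm}(1), $\lmlex(d) = \lmlex(d_t) \cdot \lmlex(d')$, so $\lmlex(d) e_I' = \lmlex(d_t) \cdot u$ where $u := \lmlex(d') e_I'$. Since $u$ is already an $x$-monomial, $\lmlex(u) = u$, and by the inductive hypothesis applied to $d'$ we have $\Red(\lmlex(u)) = \Red(u) = e_I'$. Now Lemma \ref{lem: Red(d_iu)=lm(e_I)} applies directly (its hypothesis is exactly $\Red(\lmlex(u)) = e_I'$ with $t \in I$) and yields $\Red(\lmlex(d_t u)) = e_I'$. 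Finally, $\lmlex(d_t u) = \lmlex(d_t)\lmlex(u) = \lmlex(d_t)\lmlex(d')e_I' = \lmlex(d)e_I'$ by another application of Lemma \ref{lem: Glm}(1), closing the induction.

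There is essentially no real obstacle here: the heavy lifting was done in Lemma \ref{lem: Red(d_iu)=lm(e_I)}, which analyzed how multiplication by a single $d_t$ with $t \in I$ preserves the reduced form condition. The only mild care needed is to track that at each step $u$ remains an honest $x$-monomial so that $\lmlex(u) = u$, making the hypothesis of Lemma \ref{lem: Red(d_iu)=lm(e_I)} literally satisfied rather than just morally so, and to invoke multiplicativity of $\lmlex$ on monomials at the end to identify $\lmlex(d_t u)$ with $\lmlex(d)e_I'$.
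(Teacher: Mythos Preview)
Your proof is correct and follows essentially the same route as the paper: both argue by repeated application of Lemma~\ref{lem: Red(d_iu)=lm(e_I)}, peeling off one factor $d_t$ with $t\in I$ at a time. The paper's version is terser (it phrases the induction as ``repeated application'' and works with $e_I$ rather than $e_I'$, then identifies $\lmlex(de_I)=\lmlex(d)e_I'$ at the end), while you make the base case $\Red(e_I')=e_I'$ and the tracking of the $x$-monomial $u$ explicit, but the underlying argument is the same.
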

\begin{proof}
By Lemma \ref{lem: Red(d_iu)=lm(e_I)} $\Red( \lmlex( d_t e_I)) = e_I'$ for every $t \in I$.
So by repeated application of this lemma for any $d_I$-monomial, $d$, $\Red( \lmlex( d e_I)) =e_I'$.
By Lemma \ref{lem: Glm}(1) $\lmlex( d e_I) = \lmlex( d )e_I'$.
\end{proof}

\section{Main Theorem}

We now draw together the results of the previous sections to prove that we have a structure theorem.

\begin{lemma} \label{lemma: red}
For $e_I$,$e_I'$ and $V_I$ as in the box, given distinct $d_I$-monomials $r,r_1, \dots , r_m$,
we have $rV_I \cap \left(\sum_{i=1}^m r_iV_I\right) = \{0\}$.
In particular we have $rV_I \cap r'V_I=0$ for $d_I$-monomials $r \neq r'$.

For $u \in V_I-\{0\}$ and $d \in k[d_I]$ we have
$\Red(\Glm(du)) \approx e_I'$.

Conversely, if $m$ is an $x$-monomial
then there exists an $I \subseteq [n]$ with $n \in I$, a $d_I$-monomial, $r$, and a $g \in G$ such that
$\Glm(r g e_I) = \{ m \}$
\end{lemma}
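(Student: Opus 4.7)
I would handle the three parts of the lemma separately, though all three draw on the preliminary observation that for distinct $d_I$-monomials $d \neq d'$ the monomials $\lmlex(d) e_I'$ and $\lmlex(d') e_I'$ lie in disjoint $G$-orbits. This holds because each is weakly decreasing in its exponents (being the leading monomial of a $d_I$-monomial), hence the unique lex-maximum in its orbit, and the map $d \mapsto \lmlex(d)$ on $d_I$-monomials is injective since the exponent tuples recover $d$.

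For the reduced-form assertion, I write $d = \sum_j \lambda_j d_j$ as a finite $k$-linear combination of distinct $d_I$-monomials with $\lambda_j \neq 0$. Each $\lambda_j d_j u$ is nonzero since $S$ has no zero divisors, and Lemma \ref{lem: Glm(du) = lm(e_I)} gives $\Glm(d_j u) \approx \lmlex(d_j) e_I'$, while Corollary \ref{lem: Red(d_Ie_I)=lm(e_I)} gives $\Red(\lmlex(d_j) e_I') = e_I'$. By the preliminary observation the $\Glm$'s of distinct summands sit in disjoint orbits, so iterated use of Lemma \ref{lem: Glm}(6) yields $\Glm(du) \approx \lmlex(d_{j^*}) e_I'$ for some $j^*$; Lemma \ref{lem: x=y => red(x)=red(y)} then gives $\Red(\Glm(du)) \approx e_I'$.

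For the independence assertion, suppose $rv = \sum_i r_i v_i$ with $rv \neq 0$; after dropping zero terms I may assume each $r_i v_i \neq 0$. Let $r_*$ be the unique member of $\{r, r_1, \ldots, r_m\}$ making $\lmlex(r_*) e_I'$ $\succ$-maximal (unique by the preliminary observation). If $r_* = r$, the monomials of $\Glm(rv) \approx \lmlex(r) e_I'$ are strictly $\succ$-above every monomial of any $r_i v_i$, hence cannot be cancelled in the identity $rv - \sum_i r_i v_i = 0$, contradicting $rv \neq 0$. If $r_* = r_k$ for some $k$, the same argument applied to $r_k v_k$ yields a contradiction to $r_k v_k \neq 0$.

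For the existence assertion, pick $g \in G$ so that $g^{-1} m = x_1^{a_1} \cdots x_n^{a_n}$ has weakly decreasing exponents; then $g^{-1} m = \lmlex(d)$ for $d = \prod_i d_i^{a_i - a_{i+1}}$ (set $a_{n+1} = 0$). I define $I$ to be the ``drop points'' $\{i \in [n-1] : a_i > a_{i+1}\} \cup \{n\}$, take $e_I = e_I'$, and set $r = d/(\tilde{d_I}/d_n)$; the construction of $I$ ensures $\tilde{d_I}/d_n \mid d$, so $r$ is a valid $d_I$-monomial satisfying $\lmlex(r) e_I' = \lmlex(d) = g^{-1} m$. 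Since $G$ centralises $R$, $rge_I = gre_I'$, so by Lemma \ref{lem: Glm}(7) the goal reduces to $\Glm(re_I') = \{\lmlex(r) e_I'\}$. Lemma \ref{lem: Glm(du) = lm(e_I)} already gives $\Glm(re_I') \subseteq G \cdot \lmlex(r) e_I' \cap M(re_I')$, so the remaining task---and the main technical obstacle I anticipate---is to verify that no other monomial of $M(re_I')$ lies in this orbit. My plan for this is a level-by-level exponent argument: the columns of $e_I'$ (the initial segments $[i]$ for $i \in I \setminus \{n\}$) act as rigid anchors so that the top exponent $a_1$ forces the initial segment $[i_1 = \min(I \setminus \{n\})]$ into every set $A_j$ appearing in the expansion of $r$, and iterating through the drop points of $k \mapsto a_k$ pins down each $A_j = [i_j]$, giving $\nu = \lmlex(r)$ and $\mu = \lmlex(r) e_I'$.
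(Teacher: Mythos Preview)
Your approach coincides with the paper's for all three parts: the same preliminary observation that distinct $d_I$-monomials give $\not\approx$ leading terms via Lemma~\ref{lem: Glm(du) = lm(de_I)}, the same iterated use of Lemma~\ref{lem: Glm}(6) to pass from single $d_I$-monomials to sums, and the same construction of $I$ and $r$ from the weakly-decreasing rewrite of $m$ in part~3.

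The one point of divergence is worth noting. For the singleton claim $\Glm(r g e_I)=\{m\}$ in part~3, the paper simply cites Lemma~\ref{lem: Glm(du) = lm(de_I)}; but that lemma as stated only yields $\Glm(re_I)\approx m$, not that $m$ is the \emph{sole} monomial of $re_I$ in the orbit $Gm$. You are right to flag this as requiring an extra argument, and your level-by-level plan (forcing $[i_1]\subseteq A_j$ for all $j$ from the top exponent count, then iterating through the drop points) is a sound way to close it. One small correction: you write ``take $e_I=e_I'$,'' but $e_I$ is fixed in advance, not yours to choose. The reduction to $e_I'$ is nonetheless valid: every monomial of $e_I$ other than $e_I'$ is $\prec e_I'$, so multiplying it by any monomial of $r$ yields something strictly $\prec \lmlex(r)e_I'=m$ (the strict inequality in one factor forces strict inequality in the product under $\geq_{\textnormal{lex}}$), hence nothing in $Gm$. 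Thus $\Glm(re_I)\cap Gm = \Glm(r\cdot e_I')\cap Gm$, and your argument for $e_I'$ suffices.
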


\begin{proof}
In this proof we show that the result holds for a $d_I$-monomial,
then use Lemma \ref{lem: Glm}(6) to get the result about an arbitrary element of $k[d_I]$.

First we make a general observation.
By Lemma \ref{lem: Glm(du) = lm(de_I)}
for $r$ any $d_I$-monomial and any $u,v \in V_I-\{0\}$, we have $\Glm(ru) \approx \Glm(rv) \approx \lmlex(r)e_I'$.
For $r'$ a $d_I$-monomial $r \neq r'$, we have $\lmlex(r)e_I' \neq \lmlex(r')e_I'$.
Hence $\Glm(ru) \not \approx \Glm(r' v)$.

To prove that $rV_I \cap \left(\sum_{i=1}^m r_iV_I\right) = \{0\}$,
let $r,r_1, \dots ,r_m \in R$ be distinct $d_I$-monomials and $u,u_1, \dots , u_m$ be non-zero elements of $V_I$.
Then by the above observation, $\Glm(r_iu_i) \not \approx \Glm(r_j u_j)$ for any $i,j \in [m]$ with $i\neq j$.
So by repeated application of Lemma \ref{lem: Glm}(6),
$\Glm( \sum_{i=1}^m r_iu_i) \approx \Glm(r_j u_j)$ for some $j \in [m]$.
Hence, by the above observation, $\Glm(ru) \not \approx  \Glm(r_ju_j)$ as $r \neq r_j$,
so $\Glm(ru) \not \approx  \Glm(\sum_{i=1}^m r_iu_i)$.
So $rV_I \cap   \left(\sum_{i=1}^m r_iV_I\right) = \{0\}$.
This proves the first statement,
the ``in particular'' statement follows as a special case or from the observation at the start of the proof.

Now we prove that for $d \in k[d_I]$ we have $\Red(\Glm(du)) \approx e_I'$.
For $r$ a $d_I$-monomial, by Lemma \ref{lem: Glm(du) = lm(de_I)}, $\Glm(ru) \approx \lmlex(r)e_I'$.
By Corollary \ref{lem: Red(d_Ie_I)=lm(e_I)}, $\Red(\lmlex(r)e_I') = e_I'$.
So by Lemma \ref{lem: x=y => red(x)=red(y)},  $\Red(\Glm(ru)) \approx e_I'$.
This deals with the case when $d=r$ is a $d_I$-monomial.

For $d = \sum_{i=1}^m \lambda_i r_i$, with $\lambda_i \in k-\{0\}$ and $r_i$ $d_I$-monomials,
the $\Glm(\lambda r_iu)$ are pairwise disjoint.
Hence by Lemma \ref{lem: Glm}(6), there exists an $j\in \{1, \dots, m\}$ such that $\Glm(du) \approx \Glm(r_{j}u)$.
So may prove that $\Red(\Glm(du)) \approx e_I'$ using the ``$d$ is a $d_I$-monomial case'' proved above.

For the converse:
By Lemma \ref{lem: Glm}(7),
it is sufficient to find $I,r$ and $g_m$ for  $m$ with the property that $m \geq_{\text{lex}} gm$ for all $g \in G$.
So for the rest of the proof we assume that $m$ has this property.

Now $m = \prod_{i=1}^n \lmlex(d_i)^{t_i}$, for some $t_i \in \N$.
Let $I = \{ i | t_i \neq 0 \} \cup \{ n \}$, so that $m = d_n^{t_n} \prod_{i\in I} \lmlex(d_i)^{t_i} $.
Then $e_I$ divides $m$ and $m = \lmlex(e_I d_n^{t_n} \prod_{i\in I} \lmlex(d_i)^{t_i-1})$.
Let $r = d_n^{t_n} \prod_{i\in I}d_i^{t_i-1}$
then by Lemma \ref{lem: Glm(du) = lm(de_I)} $\Glm(e_I r) = \{ m\}$.
\end{proof}

\begin{thm}\label{thm: str thm}
Let $e_I$ be elements of $S$ such $\Glm(e_I) = \{ \lmlex(\tilde{d_I})/d_n \}$ and
$\textnormal{stab}_{G}(e_I)= \textnormal{ stab}_G(\lmlex (e_I) )$.
Let $V_I$ be the $kG$-module generated by $e_I$.
Then as $kG$-modules we have:
$$
S \iso \Oplus_{\{n\} \subseteq I \subseteq [n]} k[d_I] \otimes_k V_I
$$
is a structure theorem for $S$,
i.e. the map from right to left is the $kG$-homomorphism $d \otimes_k v \mapsto dv$.
\end{thm}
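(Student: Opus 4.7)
The plan is to prove $\phi$ is both injective and surjective; it is a $kG$-homomorphism by construction. The principal tool is Lemma \ref{lemma: red}.

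For injectivity, I would express a general element of $\bigoplus_I k[d_I] \otimes_k V_I$ in the unique normal form $\sum_{I,j} r_{I,j} \otimes v_{I,j}$, where for each $I$ the $r_{I,j}$ are pairwise distinct $d_I$-monomials and $v_{I,j} \in V_I$; this relies on the $d_I$-monomials being a $k$-basis of $k[d_I]$. Assuming $\phi$ maps this to $0$, I want every $v_{I,j}$ to vanish. Restricting to the terms with $v_{I,j} \neq 0$, Lemma \ref{lem: Glm(du) = lm(de_I)} gives $\Glm(r_{I,j} v_{I,j}) \approx \lmlex(r_{I,j}) e_I'$, and Corollary \ref{lem: Red(d_Ie_I)=lm(e_I)} together with Lemma \ref{lem: x=y => red(x)=red(y)} then forces $\Red(\Glm(r_{I,j} v_{I,j})) \approx e_I'$. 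The critical step is to verify that these $\Glm$-classes are pairwise non-$\approx$-equivalent across different labels $(I,j)$: distinct $I$'s are separated because the exponent pattern of $e_I'$ (exhibited explicitly after Lemma \ref{lem: red=lmlex}) recovers $I$ uniquely; for the same $I$ with distinct $r_{I,j}$ this is exactly the first statement of Lemma \ref{lemma: red}. With those $\Glm$-classes disjoint, iterating Lemma \ref{lem: Glm}(6) shows $\Glm(\phi(x))$ is $\approx$-equivalent to a single summand's $\Glm$, hence nonempty, contradicting $\phi(x) = 0$.

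For surjectivity, by $k$-linearity it suffices to show every $x$-monomial $m$ lies in $\Image(\phi)$. Since $\phi$ preserves total degree, I work inside a fixed homogeneous component of $S$, which contains only finitely many $x$-monomials; on this finite set $\succ$ is well-founded, so I induct on $m$ in the $\succ$-preorder. Given $m$, the converse portion of Lemma \ref{lemma: red} supplies $I \ni n$, a $d_I$-monomial $r$, and $g \in G$ with $\Glm(rge_I) = \{m\}$. Expanding $rge_I = cm + f'$ in the $x$-monomial basis, every $x$-monomial of $f'$ is strictly $\prec m$, since any $\approx$-equivalent one would also belong to the singleton $\Glm(rge_I)$. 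The coefficient $c$ is a unit: after replacing $m$ by its lex-maximal $G$-translate (and adjusting $g$ accordingly), every monomial strictly $\prec m$ is also strictly less than $m$ in the lex order, so $m = \lmlex(rge_I)$ and its coefficient is the product of the lex-leading coefficients of $r$ (which is $1$) and of $e_I$ (a unit by the hypothesis on $e_I$). Therefore $m = c^{-1}(rge_I - f')$, which lies in $\Image(\phi)$ since $rge_I = \phi(r \otimes ge_I)$ and, by induction, $f' \in \Image(\phi)$.

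The main obstacle I anticipate is the disjointness assertion invoked for injectivity --- namely that two different index sets $I, I'$ cannot give $\approx$-equivalent reduced leading monomials --- which requires reading off $I$ from the step pattern in the explicit formula for $e_I'$. Once that is nailed down, both directions reduce to a fairly mechanical assembly of the tools developed in the earlier sections.
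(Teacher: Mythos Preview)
Your proposal is correct and follows essentially the same approach as the paper: injectivity via pairwise disjointness of the $\Glm$-sets (separating different $I$'s by $\Red$ and, within one $I$, by the observation $\lmlex(r)e_I'\not\approx\lmlex(r')e_I'$ established inside the proof of Lemma~\ref{lemma: red}---which is what you actually need, rather than its first statement as you cite), and surjectivity via the converse part of Lemma~\ref{lemma: red} followed by downward $\succ$-induction. Your surjectivity is organized monomial-by-monomial rather than on general $u\in S$ as in the paper, and you are in fact more explicit than the paper about why the leading coefficient $c$ is a unit.
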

\begin{proof}
It is clear that the map is a $kG$-map as inclusion and multiplication by $d_i$ are $kG$-maps.
It remains to show that the map is a bijection.

\textbf{Injection:}
By induction on subsets of $[n]$ containing $n$.
The base case is just the observation that for every $I \subseteq [n]$ with $n \in I$, the map $k[d_I]\otimes_k V_I \to S$ is injective.
To see this suppose that $u = \sum_{i=1}^m r_i \otimes u_i \mapsto 0$, where $r_i \neq r_j$ for $i\neq j$ and $u_i \in V_I-\{0\}$.
Then $\sum_{i=1}^m r_i u_i = 0$.
So by the first statement of Lemma \ref{lemma: red},
$r_1u_1 = \sum_{i=2}^m r_i u_i = 0$ and thus $u=0$.
So $k[d_I] \otimes_k V_I \to S$ is an injective map.

For the inductive hypothesis, suppose that given, $A$, a set of subsets of $[n]$ all of which contain $n$
(i.e. $A \subset \mathbb{P}([n])$ and $\forall I \in A, n \in I$),
the map $\Oplus_{I \in A} k[d_I]\otimes_k V_I \to S$ is injective.
We want to show that
$\left(\Oplus_{I \in A} k[d_I]\otimes_k V_I \right) \oplus \left( k[d_J]\otimes_k V_J\right) \to S$
is injective for $J \subseteq [n]$, $n \in J$ and $J \not \in A$.

By Lemma \ref{lemma: red} for all $v \in \phi( k[d_J] \otimes_k V_J ), \Red( \Glm (\phi(u))) \approx e_J'$.
So it is sufficient to show that for $u = \sum_{I \in A} u_I$ with $u_I \in k[d_I] \otimes_k V_I$,
$\Red( \Glm (\phi(u) ) ) \not \approx e_J'$.

By Lemma \ref{lemma: red} $\Red( \Glm (\phi(u_I))) \approx e_I'$,
it is clear that $e_I' \not \approx e_{I'}'$ for $I \neq I'$.
Hence by Lemma \ref{lem: x=y => red(x)=red(y)} the $\Glm (\phi(u_I))$ are disjoint.
By Lemma \ref{lem: Glm}(6), $\Glm (\sum_{I \in A} \phi(u_I)) \approx \Glm (\phi(u_{I_0}))$ for some $I_0 \in A$.
By Lemma \ref{lemma: red} again, $\Red( \Glm (\phi(u) ) ) \approx \Red (\Glm ( \phi(u_{I_0}))) \approx e_{I_0}'$,
and $e_{I_0}' \not \approx e_J'$ as $I_0 \neq J$.
This shows that the map is in injection.

\textbf{Surjection:}
To show that the map is surjective we argue by induction on $\Glm$,
where $\Glm (u) > \Glm (v)$ if $\Glm (u) \succ \Glm (v)$ or if $\Glm (u) \approx \Glm (v)$ and $\Glm (u) \supsetneq \Glm (v)$.

The least $\Glm (u)$ is $\{ 0 \}$, which is clearly mapped onto.
For $u \in S$ assume every $v\in S$ with $\Glm(v)<\Glm(u)$ is mapped onto.
Pick $m \in \Glm (u)$, $\Red(m) = g \cdotp e_I'$ by Lemma \ref{lem: red=lmlex}.
Then by Lemma \ref{lemma: red}, $\exists r \in k[d_I]$ s.t. $\Glm (rge_I)= \{ m \}$,
hence $\Glm(u) > \Glm (u - r g e_I)$.

$r g e_I = \phi (r \otimes_k g e_I)$,
and by the inductive hypothesis $u - r g e_I = \phi(\tilde{u})$ for some $\tilde{u}$,
so $\phi (\tilde{u} + r \otimes_k g e_I) = u$.
\end{proof}

Note that the modules $V_{I}$ are not indecomposable. 
In fact it may be interesting to calculate the vertices of their indecomposable summands as in the example of the upper triangular group,
\cite[above Corollary 9.5]{Symonds mod str 2}, the modules which occur in the structure theorem,
$X_J$ (written as $\bar{X}_{J}(I)$ for $J \subseteq I \subseteq \{1, \dots ,n \}$ in the notation of that paper),
are induced from a subgroup, $U_J$, which depends on the the set of invariants $\{d_j | j \in J \}$.
Be warned that we have adopted different conventions to \cite{Symonds mod str 2},
in particular, for us structure theorems are a sum of $k[d_i | i \in I] \otimes_k X_I$,
but in \cite{Symonds mod str 2} they are a sum of $k[d_i | i \not \in I] \otimes_k X_I$.

It is worth noting that if $k$ were a field, in principal,
we could have shown the the map in Theorem \ref{thm: str thm}
was either injective or surjective and then compared the Hilbert series of the two modules.
However this proved somewhat complicated
as for $I=\{i_1 , \dots , i_m \}$ with $i_1 < \dots < i_m=n$, the dimension over $k$ of $V_I$ is
$\frac{|G|}{|\stab_G(e_I)|} = \frac{n!}{i_1! (i_2-i_1)! \dots (n-i_{m-1})!}$.

\end{document}